\newtheoremstyle{sltheorems}
{10pt}
{6pt}
{\slshape}
{}
{\bfseries}
{.}
{.5em}
{\thmname{#1}\thmnumber{ #2}\thmnote{ (#3)}}
\theoremstyle{sltheorems} 
\newtheorem{Thm}{Theorem}
\newtheorem{Lem}{Lemma}
\newtheorem{Def}{Definition}
\newtheorem{Prop}{Proposition}
\newtheorem{Cor}{Corollary}
\def\urltwo#1#2{\mbox{\href{#1}{\texttt{#2}}}}
\newtheoremstyle{remark}
{10pt}
{6pt}
{\rm} 
{}
{\bfseries}
{.}
{.5em}
{\thmname{#1}\thmnumber{ #2}\thmnote{ (#3)}}
\theoremstyle{remark} 
\newtheorem{Rem}{Remark}
\newtheorem{Ex}{Example}
\let\@@pmod\pmod
\DeclareRobustCommand{\pmod}{\@ifstar\@pmods\@@pmod}
\def\@pmods#1{\mkern4mu({\operator@font mod}\mkern 6mu#1)}
\patchcmd{\env@cases}{1.2}{1}{}{}
\newcommand{\numSconstantdecimals}{$130\,000$ }
\newcommand{\custspace}{\hspace{.3em}}  
\begin{document}
\title[Euler constants from primes in arithmetic progression]{Euler constants from primes\\ in arithmetic progression}
\author{Alessandro Languasco and Pieter Moree}

\date{}

\subjclass{01A99, 11N37, 11Y60}

\begin{abstract}
\noindent Many Dirichlet series of number theoretic interest can be written as a product of 
generating series $\zeta_{\,d,a}(s)=\prod\limits_{p\equiv a\pmod*{d}}(1-p^{-s})^{-1}$, 
with $p$ ranging over all the primes in the primitive residue class modulo $a\pmod*{d}$, and a function $H(s)$ well-behaved  around $s=1$. In such a case the corresponding Euler constant can be 
expressed in terms of the Euler constants $\gamma(d,a)$ of the series $\zeta_{\,d,a}(s)$ 
involved and the (numerically more harmless) term $H'(1)/H(1)$. Here we systematically 
study $\gamma(d,a)$, their numerical evaluation 
and discuss some examples.
\end{abstract}

\maketitle

\vspace{-0.6cm}
\section{Introduction}
Given any complex number $s$ having 
real part $>1$, the Riemann zeta function $\zeta(s)$ satisfies
\begin{equation*}
 \zeta(s) = \sum_{n=1}^\infty \frac{1}{n^s}=\prod_p\frac{1}{1-p^{-s}},
\end{equation*}
where here and in the sequel the letter $p$ is exclusively used to denote
prime numbers.
The Laurent expansion of $\zeta(s)$ around $s=1$ be written in the form
\begin{equation}
\label{taylor}    
\zeta(s)=\frac{1}{s-1}+\sum_{j=0}^{\infty}\frac{(-1)^j}{j!}\gamma_j\,(s-1)^j.
\end{equation}
It has been independently observed by various authors (for example Briggs and Chowla \cite{BC})  that
for every $j \ge 0$ one has
\begin{equation}
\label{stieltjes}
\gamma_j=\lim_{N\rightarrow \infty}
\Bigl(\sum_{k=1}^N \frac{(\log k)^j}{k}-\frac{(\log N)^{j+1}}{j+1}\Bigr).
\end{equation}
The most famous of these so-called \emph{Stieltjes} constants (see, e.g., Coffey \cite{Coffey}) is
$$\gamma=\gamma_0=0.57721 56649 01532 86060 65120 90082 40243 10421 59335 93992 35988 05767\dotsc,$$
the \emph{Euler constant}\footnote{Sometimes called \emph{Euler-Mascheroni constant}. 
Lagarias \cite{Lagarias} in his very readable survey argues that the name Euler constant is more appropriate.}.
Diamond \cite{Diamond}, Kurokawa and  Wakayama \cite{KW} and various 
other authors considered $p$-adic and $q$-analogues of 
Euler constants. Many further generalizations of Euler constants are conceivable, cf.\,Knopfmacher \cite{Knopf} 
and Hashimoto \cite{Hashimoto}.
One of these arises on letting the integers $k$ in \eqref{stieltjes} run 
through an arithmetic progression \cite{Dilcher, Lehmer, Shirasaka}. Here we consider another generalization 
obtained by restricting the primes in the Euler product for $\zeta(s)$ to be in a prescribed 
primitive arithmetic progression $a\pmod*{d}$, that is we will consider
\begin{equation}
\label{Fda-def}
\zeta_{\,d,a}(s)=\prod_{p\equiv a\pmod*{d}}\frac{1}{1-p^{-s}},
\end{equation}
with $d$ and $a$ coprime.
In 1909, Landau \cite{La09a} 
(see also \cite[\S 176--183, Vol.~II]{Lehrbuch})
in the course of settling a question 
of D.N.~Lehmer,  
considered $\zeta_{\,d,a}(s)$.
Moree \cite{bias} 
studied $\zeta_{\,d,a}(s)$ 
for $d=3,4$, $\zeta_{\,12,1}(s)$ was considered by Fouvry
et al.\,\cite{FLW} and earlier by 
Serre \cite[pp.~185--187]{SerreChebotarev}.
In case $d$ is an odd prime, the functions $\zeta_{\,d,1}(s)$ were implicitly 
studied in \cite{FLM,SW}. Undoubtedly further examples can be found in the literature.

It follows from \eqref{taylor} that
\begin{equation*}
\gamma=\lim_{s\rightarrow 1^+}\Bigl(
\frac{\zeta'}{\zeta}(s) 
+\frac{1}{s-1}\Bigr),
\end{equation*}
where here and in the rest of the article the shorthand $F'/F(s)$ stands for $F'(s)/F(s)$.
Likewise, given a Dirichlet series $F(s)$ one can wonder whether 
\begin{equation*}
\gamma_F=\lim_{s\rightarrow 1^+}\Bigl(
\frac{F'}{F}(s) 
+\frac{\alpha}{s-1}\Bigr),
\end{equation*}
exists for an appropriate choice of $\alpha$. If so, we can regard it as the
Euler constant associated to $F.$ We can now define $\gamma(d,a)$ as
\begin{equation*}
\label{gammada-def} 
\gamma(d,a)=\lim_{s\rightarrow 1^+}\Bigl(
\frac{\zeta_{\,d,a}'}{\zeta_{\,d,a}}(s) 
+\frac{1}{\varphi(d)(s-1)}\Bigr),
\end{equation*}
where $\varphi$ denotes Euler's totient function.
The problem we address in this article is how to compute these constants with a high
numerical precision. For example, the following formula 
\begin{equation}
\label{nicelimit}
\gamma(d,a)=\lim_{x\rightarrow \infty}\Bigl(\frac{\log x}{\varphi(d)} - \sum_{\substack{p\le x\\ p\equiv a\pmod*{d}}}\frac{\log p}{p-1}\Bigr).
\end{equation}
valid for coprime $a$ and $d$ is very compact, but cannot be used for a rigorous estimate. It is easily derived
from the expression
\begin{equation*}
\gamma(d,a)=\lim_{x\rightarrow \infty}
\Bigl(\frac{\log x}{\varphi(d)} - 
\sum_{\substack{n\le x\\ n\equiv a\pmod*{d}}}\frac{\Lambda(n)}{n}\Bigr),
\end{equation*}
where 
$\Lambda$ denotes the von Mangoldt function 
(the latter two limit results
are special cases of Moree \cite[Thm.\,3]{LvR}). 

Our main result (proven in 
Section \ref{sec:gammaproperties}) gives a less elegant formula, but it can 
be used to obtain $\gamma(d,a)$ with high numerical precision (at least when $d$ is not too large). 
\begin{Thm}
\label{mainabelian}
Let $a$ and $d\ge 2$ be coprime integers. We have
\begin{equation}
    \label{compact}
\gamma(d,a)=\gamma_1(d,a)-\sum_{p\equiv a\pmod*{d}}\frac{\log p}{p(p-1)}
+
\sum_{n\equiv a\pmod*{d}}\frac{(1+\mu(n))\Lambda(n)}{n},
\end{equation}
where $\mu$ is the M\"obius function and
\begin{equation}
\label{gamma1-def}
\gamma_1(d,a)
=
\frac{1}{\varphi(d)}\Bigl(\gamma+\sum_{p\mid d}\frac{\log p}{p-1}+
\sum_{\chi\ne \chi_0}{\overline{\chi}}(a)
\frac{L'}{L}(1,\chi)\Bigr),
\end{equation}
and $L(s,\chi)$ denotes a Dirichlet $L$-function.
\end{Thm}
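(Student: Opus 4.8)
The plan is to work directly from the defining limit for $\gamma(d,a)$, differentiating the Euler product logarithmically and then stripping off the prime-power contributions one family at a time until only a sum over primes remains, which Dirichlet characters convert into logarithmic derivatives of $L$-functions. For $\Re s>1$ the product in \eqref{Fda-def} converges absolutely, so I may write
\[
\frac{\zeta_{\,d,a}'}{\zeta_{\,d,a}}(s)
=-\sum_{p\equiv a\pmod*{d}}\frac{\log p}{p^{s}-1}
=-\sum_{p\equiv a\pmod*{d}}\frac{\log p}{p^{s}}
-R(s),\qquad
R(s)=\sum_{p\equiv a\pmod*{d}}\frac{\log p\,\,p^{-2s}}{1-p^{-s}}.
\]
For $s\ge1$ the series defining $R$ is dominated by $\sum_p \frac{\log p}{p(p-1)}$, hence converges uniformly, and $R(s)\to\sum_{p\equiv a\pmod*{d}}\frac{\log p}{p(p-1)}$ as $s\to1^{+}$; this already accounts for the middle term of \eqref{compact}. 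Writing $P(s)=\sum_{p\equiv a\pmod*{d}}\log p\,p^{-s}$, the theorem will follow once I show
\[
\lim_{s\to1^{+}}\Bigl(-P(s)+\frac{1}{\varphi(d)(s-1)}\Bigr)
=\gamma_1(d,a)+\sum_{n\equiv a\pmod*{d}}\frac{(1+\mu(n))\Lambda(n)}{n}.
\]

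To treat $P(s)$ I would insert the orthogonality relation: the quantity $\frac{1}{\varphi(d)}\sum_{\chi}\overline{\chi}(a)\chi(n)$ equals $1$ when $n\equiv a\pmod*{d}$ and $0$ otherwise (valid since $\gcd(a,d)=1$), and compare with $-L'/L(s,\chi)=\sum_{p,\,m\ge1}\chi(p)^{m}\log p\,p^{-ms}$. Splitting off the $m=1$ part gives $\sum_{p}\chi(p)\log p\,p^{-s}=-\,L'/L(s,\chi)-C(s,\chi)$, where $C(s,\chi)=\sum_{p}\sum_{m\ge2}\chi(p^{m})\log p\,p^{-ms}$ is absolutely convergent for $s\ge1$; hence $-P(s)=\frac{1}{\varphi(d)}\sum_{\chi}\overline{\chi}(a)\bigl(L'/L(s,\chi)+C(s,\chi)\bigr)$. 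Recombining the $C$-terms by orthogonality at $s=1$ yields $\sum_{p^{m}\equiv a\pmod*{d},\,m\ge2}\log p\,p^{-m}$, which is exactly $\sum_{n\equiv a\pmod*{d}}(1+\mu(n))\Lambda(n)/n$: indeed $(1+\mu(n))\Lambda(n)$ vanishes at primes (where $\mu=-1$) and equals $\Lambda(n)$ at proper prime powers (where $\mu=0$).

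It then remains to extract the main part from the $L'/L$-terms. For the principal character I would use $L(s,\chi_{0})=\zeta(s)\prod_{p\mid d}(1-p^{-s})$, so that $L'/L(s,\chi_{0})=\zeta'/\zeta(s)+\sum_{p\mid d}\log p/(p^{s}-1)$, and then feed in \eqref{taylor} in the form $\zeta'/\zeta(s)=-1/(s-1)+\gamma+O(s-1)$. The resulting pole cancels the term $1/(\varphi(d)(s-1))$, leaving $\frac{1}{\varphi(d)}\bigl(\gamma+\sum_{p\mid d}\log p/(p-1)\bigr)$ in the limit. For $\chi\ne\chi_{0}$ the function $L'/L(s,\chi)$ is regular at $s=1$ because $L(1,\chi)\ne0$, so these terms converge to $\frac{1}{\varphi(d)}\sum_{\chi\ne\chi_{0}}\overline{\chi}(a)\,L'/L(1,\chi)$. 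Together the three pieces reproduce $\gamma_1(d,a)$ as in \eqref{gamma1-def}, completing the identification.

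The routine analytic input is the uniform convergence for $s\ge1$ of the prime-power series $R$ and $C$, which legitimises every interchange of limit and summation. The one genuinely non-elementary ingredient is the non-vanishing $L(1,\chi)\ne0$ for $\chi\ne\chi_{0}$, without which the $\chi\ne\chi_{0}$ contributions could not be evaluated at $s=1$. The point demanding the most care is the bookkeeping of the two distinct prime-power corrections: the term $R$ ranges over \emph{base primes} $p\equiv a\pmod*{d}$, whereas the $C$-term ranges over \emph{prime powers} $p^{m}\equiv a\pmod*{d}$, and these enter \eqref{compact} with opposite signs; keeping the two congruence conditions apart is what makes the final formula come out correctly.
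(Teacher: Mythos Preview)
Your argument is correct and follows essentially the same route as the paper's proof: both relate $\zeta_{\,d,a}'/\zeta_{\,d,a}(s)$ to $\frac{1}{\varphi(d)}\sum_\chi\overline{\chi}(a)\,L'/L(s,\chi)$ up to two absolutely convergent prime-power corrections (one indexed by primes $p\equiv a\pmod*{d}$, the other by prime powers $p^m\equiv a\pmod*{d}$), then let $s\to1^+$ using $L(s,\chi_0)=\zeta(s)\prod_{p\mid d}(1-p^{-s})$ and $L(1,\chi)\ne0$ for $\chi\ne\chi_0$. The only difference is organisational: the paper subtracts $\frac{1}{\varphi(d)}\sum_\chi\overline{\chi}(a)\log L(s,\chi)$ from $\log\zeta_{\,d,a}(s)$ \emph{before} differentiating, whereas you differentiate first and then introduce the $L'/L$-terms through orthogonality on the prime sum $P(s)$; the resulting bookkeeping is identical.
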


\begin{Cor}
\label{Corollary-abelian}
Suppose that
\begin{equation}
\label{abelianformat}    
F(s)=H(s)\prod_{\substack{a=1\\ (d,a)=1}}^{d-1}\zeta_{\,d,a}(s)^{e_a},
\end{equation}
with the $e_a$ rational numbers and
$H(s)$ a Dirichlet series that it is holomorphic, non-zero
and uniformly bounded
in $\Re(s)>1-\varepsilon$ for some $\varepsilon>0$, then
\begin{equation}
\label{generalident}    
\gamma_F=\frac{H'}{H}(1)+\sum_{\substack{a=1\\ (d,a)=1}}^{d-1} e_a\,\gamma(d,a).
\end{equation}
\end{Cor}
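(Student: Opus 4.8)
The plan is to reduce the statement to the definition of $\gamma(d,a)$ by taking the logarithmic derivative of \eqref{abelianformat} and carefully bookkeeping the poles at $s=1$. First I would observe that, for $\Re(s)>1$, where all the Euler products converge absolutely and are non-vanishing, the logarithmic derivative is additive over the product \eqref{abelianformat}, so that
\begin{equation*}
\frac{F'}{F}(s)=\frac{H'}{H}(s)+\sum_{\substack{a=1\\ (d,a)=1}}^{d-1} e_a\,\frac{\zeta_{\,d,a}'}{\zeta_{\,d,a}}(s).
\end{equation*}
Although the individual factors $\zeta_{\,d,a}(s)^{e_a}$ are genuinely multivalued when the exponent $e_a$ is not an integer, only the logarithmic derivative enters, and $(\log \zeta_{\,d,a}^{\,e_a})'=e_a\,\zeta_{\,d,a}'/\zeta_{\,d,a}$ is single-valued; so the rationality of the exponents causes no difficulty here.

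Next I would isolate the correct normalising constant $\alpha$. Since there are exactly $\varphi(d)$ primitive residue classes $a\pmod*{d}$, and each contributes a simple pole of shape $-1/(\varphi(d)(s-1))$ to its logarithmic derivative as $s\to 1^+$ (this being precisely the content of the existence of $\gamma(d,a)$), the natural choice is $\alpha=\frac{1}{\varphi(d)}\sum_{a} e_a$. With this choice I can pair each pole with its own factor and write
\begin{equation*}
\frac{F'}{F}(s)+\frac{\alpha}{s-1}=\frac{H'}{H}(s)+\sum_{\substack{a=1\\ (d,a)=1}}^{d-1} e_a\Bigl(\frac{\zeta_{\,d,a}'}{\zeta_{\,d,a}}(s)+\frac{1}{\varphi(d)(s-1)}\Bigr).
\end{equation*}

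Finally I would let $s\to 1^+$ termwise. Because $H$ is holomorphic and non-vanishing in a neighbourhood of $s=1$, the quantity $H'/H(s)$ tends to the finite value $H'/H(1)$, while each bracketed term converges to $\gamma(d,a)$ by definition. Hence the limit defining $\gamma_F$ exists for the above $\alpha$ and equals the right-hand side of \eqref{generalident}, which proves the corollary. I do not anticipate a genuine obstacle: the only points demanding a word of care are the multivaluedness of $\zeta_{\,d,a}^{\,e_a}$ (dealt with above) and the interchange of the finite sum with the limit, which is immediate since the sum has finitely many terms and each summand converges. The uniform boundedness of $H$ assumed in the hypothesis is in fact stronger than what this particular identity requires — holomorphy and non-vanishing at $s=1$ suffice — and is presumably imposed for the analytic continuation and numerical estimates used elsewhere in the paper.
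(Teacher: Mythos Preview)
Your proof is correct and follows exactly the approach the paper indicates: it merely states that ``it is enough to compute the logarithmic derivative of the quantities in \eqref{abelianformat}.'' You have in fact supplied more detail than the paper does, correctly identifying the normalising constant $\alpha=\frac{1}{\varphi(d)}\sum_a e_a$ and noting that the hypotheses on $H$ are stronger than strictly needed for this identity.
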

The number of decimals we are able to determine of
$H'/H(1)$ is quite variable. For example, if
$H$ is a product
of factors $\zeta_{d,a}(f_as)^{g_a}$, for some $f_a>1$, $g_a$ rational, then $\gamma_F$ will also depend on 
$g_a\, \zeta'_{d,a}/\zeta_{d,a}(f_a)$, and this is a prime sum that we are able to evaluate
carefully if $d$ is not too large, see Remark \ref{Remark-sums}. 

As the literature abounds with Dirichlet series satisfying the format \eqref{abelianformat}, this shows the relevance
of being able to compute $\gamma(d,a).$ 
To prove Corollary \ref{Corollary-abelian} it is enough to compute the logarithmic derivative 
of the quantities in \eqref{abelianformat}.

It is easy to see that the first
sum in \eqref{compact} converges.
Since $(1+\mu(n))\Lambda(n)$ is non-zero only if $n=p^k$ with $p$ a prime and $k\ge 2$, also the second sum in \eqref{compact} converges.
Indeed, both sums are bounded by
$\sum_p \frac{\log p}{p(p-1)}<0.7554.$ 
The second sum in \eqref{compact}
can be rewritten so that it only involves primes, 
see Section \ref{sec:second-sum}.

Letting $a$ and $d\ge 2$ be coprime integers, the limit for $s\to 1^+$ of \eqref{Fda-def} diverges 
and hence the quantity
\[
P(x;d,a) = 
  \prod_{\substack{p \leq x \\ p \equiv a \bmod d}}
  \Bigl( 1 - \frac1p \Bigr)
\]
tends to zero as $x$ tends to infinity. The asymptotic behavior of $P(x;d,a)$ was 
first studied in 1909 by Landau \cite[\S 7 and \S 110, Vol.~I]{Lehrbuch}\footnote{In fact, he 
worked on the equivalent problem of estimating
$\sum\limits_{\substack{p \leq x \\ p \equiv a \bmod d}}\frac{1}{p}$.}.
One has
\begin{equation}
\label{MertensAP}
P(x;d,a) =   C(d, a)(\log x)^{-\frac{1}{\varphi(d)}}
  +
  O( (\log x)^{-1-\frac{1}{\varphi(d)}})
\end{equation}
as $x \to +\infty$, with $C(d,a)$ a real and positive constant, 
usually called \emph{Mertens' constant for primes in arithmetic progressions}.
In 1974, Williams \cite{Williams74} 
gave an explicit expression for it.
In some special cases it was determined by Uchiyama \cite{Uchiyama71}
($d = 4$), Grosswald \cite{Grosswald87} ($d \in \{4$, $6$, $8\}$) and Williams \cite{Williams74} ($d =24$).

In 2007, Languasco and Zaccagnini \cite{lanzac07} proved 
an uniform (in the $d$-aspect) version of \eqref{MertensAP}
and gave the following elementary expression for $C(d,a)$:
\[
  C(d, a)^{\varphi(d)}
  =
  e^{-\gamma}
  \prod_p
    \Bigl( 1 - \frac 1p \Bigr)^{\alpha(p; d, a)},
\]
where $\alpha(p; d, a) = \varphi(d) - 1$ if $p \equiv a \bmod d$ and
$\alpha(p; d, a) = - 1$ otherwise.
In particular, such an expression gave the possibility 
of performing high precision computations on $C(d, a)$
and other related quantities, see \cite{lanzac09,lanzac10b,lanzac10},
at least when $d$ is not too large.
In \cite{lanzac08}, they 
established Bombieri--Vinogradov and Barban--Davenport--Halberstam type results
for $P(x;q,a)$.
 
We point out that often also
$\zeta_{\,d,a}(2)$ is relevant. For 
the high numerical precision evaluation of this Euler product, see, e.g., 
Ettahri--Ramar\'e--Surel \cite{ERS} or Ramar\'e \cite{Accurate,walk}.

The paper is organized as follows:
Section  \ref{sec:mainabelian-proof} 
gives a proof of Theorem \ref{mainabelian}, while
in Section \ref{sec:second-sum} we will give an alternative expression for 
the second sum in \eqref{compact} involving just primes, rather than prime powers. 
Euler constants of multiplicative sets 
are  discussed in Section \ref{sec:EulerS} and how to
express them in terms of $\gamma(d,a)$ in case the set is multiplicative abelian. It 
was in the framework of multiplicative 
sets that the authors got the idea of 
considering $\gamma(d,a)$.
Section \ref{sec:properties-specialcases} shows some 
useful properties of $\gamma(d,a)$ and its shape in some important special cases.
In particular, here the (historically important) cases $d=3$ and $d=4$ are worked out in detail. 
In Section \ref{sec:algorithm} the computation of \texorpdfstring{$\gamma(d,a)$}{gda} 
in the general case is considered. 
Section \ref{sec:Dedekind} is about the Euler constants of number fields 
(which can be expressed in terms of $\gamma(d,a)$ in case of an abelian number field). 
In Section \ref{sec:FouvrySerre} 
we will point out that the seemingly unrelated results of 
Fouvry et al.~and Serre for $d=12$ mentioned 
above, are actually connected.
Table \ref{tab:24} gives the numerical values of $\gamma(d,a)$ for 
$d=1,2,3,4,5,7,8,9$ and $12$ and $1\le a\le d-1$ coprime to $d$.
Finally, in Appendix \ref{sec:appendix} we give formulae relating
$\gamma(d,a)$ with some prime sums for $d=5,7,8,9$ and $12$ and
$1\le a\le d-1$ coprime to $d$. These rest on 
the evaluation \eqref{compacttwo} of the second sum in \eqref{compact}.

\section{Proofs of the results announced in the introduction}
Theorem \ref{mainabelian} is an easy result, but at the heart of our paper.
\subsection{Proof of Theorem \ref{mainabelian}}
\label{sec:mainabelian-proof}
Recalling that for $\Re(s)>1$ one has
\[
\log L(s,\chi) = \sum_{n=2}^\infty \frac{\chi(n) \Lambda(n)}{(\log n) n^s},
\]
by character orthogonality we immediately obtain
\[
\frac{1}{\varphi(d)}\sum_{\chi}{\overline{\chi}}(a)\log L(s,\chi)
=
\sum_{n\equiv a \pmod*{d}} \frac{\Lambda(n)}{(\log n) n^s}
=
\sum_{p\equiv a \pmod*{d}} \frac{1}{p^s}
+\sum_{\substack{{n\equiv a \pmod*{d}}\\ n\, \textrm{composite}}} \frac{\Lambda(n)}{(\log n) n^s}.
\]
Moreover, we have 
\[
\log \zeta_{\,d,a}(s) =
\sum_{p\equiv a \pmod*{d}} \sum_{k \ge 1}\frac{1}{kp^{ks}}
=
\sum_{p\equiv a \pmod*{d}} \frac{1}{p^s}
+
\sum_{p\equiv a \pmod*{d}} \sum_{k \ge 2}\frac{1}{kp^{ks}}
\]
and, by subtracting the last two formulae we can write
\[
\log \zeta_{\,d,a}(s) 
-
\frac{1}{\varphi(d)}\sum_{\chi}{\overline{\chi}}(a)\log L(s,\chi)
=
\sum_{p\equiv a \pmod*{d}} \sum_{k \ge 2}\frac{1}{kp^{ks}}
-
\sum_{\substack{{n\equiv a \pmod*{d}}\\ n\, \textrm{composite}}} \frac{\Lambda(n)}{(\log n) n^s}.
\]
Recalling that $L(s,\chi_0) = \zeta(s)\prod_{p\mid d}(1-p^{-s})$, 
differentiating and letting $s\to 1^+$ both sides, 
we finally obtain
\begin{align*}
\gamma(d,a)
-
\gamma_1(d,a)
&=
-
\sum_{p\equiv a \pmod*{d}} \log p \sum_{k \ge 2}\frac{1}{p^{k}}
+
\sum_{\substack{{n\equiv a \pmod*{d}}\\ n\, \textrm{composite}}} \frac{\Lambda(n)}{n}
\\&
= 
- \sum_{p\equiv a \pmod*{d}} \frac{\log p}{p(p-1)}
+
\sum_{n\equiv a\pmod*{d}}\frac{(1+\mu(n))\Lambda(n)}{n},
\end{align*}
in which we used the sum of a geometric series of ratio $1/p$,
the fact that $\Lambda(n)\ne 0$ if and only if $n$ is a prime power,
$\mu(p)=-1$ and $\mu(p^\ell)=0$ for $\ell\ge 2$.
\qed

\subsection{Evaluation of the second sum in \texorpdfstring{\eqref{compact}}{eqrefcompact}}
\label{sec:second-sum}
We will now evaluate the second sum in \eqref{compact} more explicitly, 
identifying the contribution of the prime powers $p^k$ with $k\ge 2$ according
to the primitive arithmetic progression 
$b\pmod*{d}$ the prime $p$ is in. 
In this very elementary analysis 
the \emph{multiplicative order} $\nu_d(b)$ 
of $b$ modulo $d$, the smallest integer $k\ge 1$ such that $b^k\equiv 1\pmod*{d}$, will play an important role. Note that $\nu_d(b)=\#\langle b\rangle$, the order of the subgroup modulo $d$ generated by $b$.

The equation $X^k\equiv a\pmod*{d}$ with $X$ in prescribed primitive
residue class  $b\pmod*{d}$ has a solution 
$k\ge 1$ if and only if $a\in \langle b\rangle$. In case $a\in \langle b\rangle$, we let $k_0$ be the smallest
integer such that 
$b^{k_0}\equiv a\pmod*{d}$ and $k_0\ge 2$. In 
case $a=1$ we obtain 
$k_0=\max\{2,\nu_d(b)\}$, if $a=b$ we have
$k_0=\nu_d(a)+1$.
We write $e_b=k_0$. 
If $b^k\equiv a\pmod*{d}$ has a solution 
with $k\ge 2$, then its solutions $k\ge 2$ are precisely given by the arithmetic progression $e_b,e_b+\nu_d(b),e_b+2\nu_d(b),\ldots$
Notice that the sum $\sum_j p^{-j}$, with $j$ running through this arithmetic progression, equals $p^{\nu_d(b)-e_b}/(p^{\nu_d(b)}-1)$, which is 
$O(p^{-2})$.
Sorting the prime powers $p^k$ according to the congruence class of $p$ modulo $d$, we obtain
\begin{equation}
    \label{compacttwo}
\sum_{n\equiv a\pmod*{d}}\frac{(1+\mu(n))\Lambda(n)}{n}=
\sum^{d-1}_{\substack{b=1 \\(d,b)=1\\ a\in \langle b\rangle}}
\sum_{p\equiv b\pmod*{d}}
\frac{p^{\nu_d(b)-e_b}\log p}{p^{\nu_d(b)}-1}.
\end{equation}
Note that if $a\in \langle b\rangle$, then $\langle a\rangle$ is a subgroup of 
$\langle b\rangle$
and hence $\nu_d(a)\mid \nu_d(b)$ by Lagrange's theorem. When $d$ is prime, then
$(\mathbb Z/d\mathbb Z)^*$ is cyclic and this is if and only if. Hence if $d$ is prime, 
the condition $a\in \langle b\rangle$ in the right hand side in \eqref{compacttwo} 
can be replaced by $\nu_d(a)\mid \nu_d(b)$.

\section{Euler constants of multiplicative sets}
\label{sec:EulerS}
A set $S$ of natural numbers is said to be \emph{multiplicative} if for every pair 
$m$ and $n$ of coprime integers in $S$,  $mn$ is in $S$. 
Let $\chi_S$ denote the 
\emph{characteristic function} of $S$. 
It is $1$ if $n$ is in $S$ and $0$ otherwise. Note that the set $S$ is multiplicative if and only if $\chi_S$ is a multiplicative function.
By $\pi_S(x)$ and $S(x)$ we denote the number of primes, respectively integers in $S$ not exceeding $x.$
The following result is a special case of a theorem 
of Wirsing \cite{Wirsing}, with a reformulation following Finch et al.\,\cite[p.\,2732]{FMS}. 
\begin{Thm}
\label{een}
Let $S$ be a multiplicative set satisfying $\pi_S(x)\sim \delta x/\log x$, as $x\to\infty$,  for some $0<\delta<1$.  
Then $$S(x)\sim c_0(S) x \log^{\delta-1}x,$$
where 
$$c_0(S)=\frac{1}{\Gamma(\delta)}\lim_{P\rightarrow \infty}\prod_{p<P}
\Bigl(1+\frac{\chi_S(p)}{p}+\frac{\chi_S(p^2)}{p^2}
+\dotsm \Bigr)\Bigl(1-\frac{1}{p}\Bigr)^{\delta},$$
converges and hence is positive.
\end{Thm}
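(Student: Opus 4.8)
The plan is to obtain Theorem \ref{een} as a direct application of Wirsing's mean-value theorem for non-negative multiplicative functions \cite{Wirsing} (in the reformulation of \cite{FMS}) to the characteristic function $f=\chi_S$; the work then splits into checking that its hypotheses reduce to the single assumption $\pi_S(x)\sim\delta x/\log x$ and into identifying the constant. First I would record the elementary structural facts. Since $S$ is multiplicative, $\chi_S$ is a non-negative multiplicative function with $\chi_S(p^k)\in\{0,1\}$ for all $p$ and $k$; in particular the prime-power values obey a bound $\chi_S(p^k)\le c_1c_2^{k}$ with $c_2=1<2$, and $\sum_p\sum_{k\ge2}\chi_S(p^k)/p^k\le\sum_p 1/(p(p-1))<\infty$, so the regularity conditions that Wirsing imposes on higher prime powers hold trivially. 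To produce his mean-value hypothesis on the primes, I would note that, by partial summation, $\pi_S(x)\sim\delta x/\log x$ is equivalent to $\sum_{p\le x}\chi_S(p)\log p\sim\delta x$ (the term $\int_2^x\pi_S(t)/t\,dt\sim\delta x/\log x$ being of lower order), which is exactly Wirsing's hypothesis with parameter $\tau=\delta$.

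With the hypotheses in place, Wirsing's theorem yields $S(x)\sim c_0(S)\,x(\log x)^{\delta-1}$ and furnishes the value of $c_0(S)$. To see that this value is the regularized Euler product in the statement, I would pass to Dirichlet series: for $\Re(s)>1$ write $D(s)=\sum_{n\in S}n^{-s}=\prod_p\bigl(1+\chi_S(p)p^{-s}+\chi_S(p^2)p^{-2s}+\cdots\bigr)$ and factor out the singular part as $D(s)=\zeta(s)^{\delta}G(s)$, where
\[
G(s)=\prod_p\Bigl(1+\frac{\chi_S(p)}{p^{s}}+\frac{\chi_S(p^2)}{p^{2s}}+\cdots\Bigr)\Bigl(1-\frac1{p^{s}}\Bigr)^{\delta}.
\]
Here $\zeta(s)^{-\delta}=\prod_p(1-p^{-s})^{\delta}$ supplies precisely the convergence factor $(1-1/p)^{\delta}$ of the statement, so that $c_0(S)=G(1)/\Gamma(\delta)$. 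The Gamma factor is the one attached to a singularity of type $(s-1)^{-\delta}$, as one checks on the model in which $D(s)\sim C(s-1)^{-\delta}$ forces $\sum_{n\le x}a_n\sim\frac{C}{\Gamma(\delta)}x(\log x)^{\delta-1}$ (for instance $\delta=2$, $a_n=d(n)$, giving $\sum_{n\le x}d(n)\sim x\log x$).

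The one genuinely non-trivial point — and the reason one cites Wirsing rather than arguing by hand — is the convergence, and hence positivity, of $G(1)$. Taking logarithms, $\log G(1)=\sum_p\bigl((\chi_S(p)-\delta)/p+O(1/p^2)\bigr)$, and the convergence of $\sum_p(\chi_S(p)-\delta)/p$ is not accessible from $\pi_S(x)\sim\delta x/\log x$ by bare partial summation; nor does a plain Hardy--Littlewood--Karamata Tauberian argument suffice, since it controls only an Abel-type smoothing of $\sum_{n\le x}\chi_S(n)$ and the passage to $S(x)$ itself would require a monotone-density regularity that $\chi_S$ does not possess. Wirsing's more delicate elementary argument is exactly what bridges this gap and simultaneously certifies that the regularized product converges to a positive limit. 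I therefore expect the verification of the hypotheses and the bookkeeping of the constant to be routine, with the whole substance of the theorem residing in the cited result of Wirsing.
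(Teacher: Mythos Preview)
Your proposal is correct and matches the paper's approach exactly: the paper does not prove Theorem~\ref{een} at all but simply states it as ``a special case of a theorem of Wirsing~\cite{Wirsing}, with a reformulation following Finch et al.\,\cite[p.\,2732]{FMS}.'' Your verification of Wirsing's hypotheses and identification of the constant is thus more detailed than what the paper offers, but entirely in line with its intent.
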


For $\Re(s)>1$ we put 
$$L_S(s) =\sum_{n\in S}\frac1{n^s}.$$
If the limit
\begin{equation*}
\gamma_S =\lim_{s\rightarrow 1^+}\Bigl(
\frac{L'_S(s)}{L_S(s)} 
+\frac{\alpha}{s-1}\Bigr)
\end{equation*}
exists for some $\alpha>0$, we say that the set $S$ admits an \emph{Euler constant} $\gamma_S$. In case
$S=\mathbb N,$ we have of course 
$L_S(s)=\zeta(s)$, $\alpha=1$ and $\gamma_S=\gamma$. 

As the following result shows, the Euler constant $\gamma_S$ determines 
the second order behavior of $S(x).$ 
\begin{Thm}
\label{thm:multiplicativeset}
Let $S$ be a multiplicative set.
If there are $\rho>0$ and $0<\delta<1$ such that
\begin{equation}
\label{primecondition}
\pi_S(x)=
\delta\pi(x)+O_S(x\log^{-2-\rho}x),
\end{equation}
then $\gamma_S\in\mathbb R$ exists and asymptotically 
\begin{equation*}
S(x)=\sum_{n\le x,\,n\in S}1=\frac{c_0(S)\,x}{\log^{1-\delta}x}\Bigl(1+\frac{(1-\gamma_S)(1-\delta)}{\log x}\bigl(1+o_S(1)\bigr)\Bigr).
\end{equation*}
In case  the prime numbers belonging to $S$ are, with finitely many exceptions, precisely
those in a finite union of arithmetic progressions, 
we have, for arbitrary $j\ge 1$,
\begin{equation}
\label{starrie1}
S(x)=\frac{c_0(S)\,x}{\log^{1-\delta}x}\Bigl(1+\frac{c_1(S)}{\log x}+\frac{c_2(S)}{\log^2 x}+\dots+
\frac{c_j(S)}{\log^j x}+O_{j,S}\Bigl(\frac{1}{\log^{j+1}x}\Bigr)\Bigr),
\end{equation}
with  $c_1(S)=(1-\gamma_S)(1-\delta)$ and
$c_2(S),\dotsc,c_j(S)$ constants.
\end{Thm}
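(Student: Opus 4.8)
The plan is to run the Selberg--Delange (Landau) method on $L_S(s)$. Since $S$ is multiplicative, $L_S$ has an Euler product, and for $\Re(s)>1$ I would isolate the prime contribution by writing $\log L_S(s)=\sum_p \chi_S(p)p^{-s}+R(s)$, where $R(s)=\sum_p\bigl[\log(1+\chi_S(p)p^{-s}+\chi_S(p^2)p^{-2s}+\cdots)-\chi_S(p)p^{-s}\bigr]$ is holomorphic for $\Re(s)>1/2$. Comparing with $\log\zeta(s)=\sum_p p^{-s}+R_\zeta(s)$ and setting
\[
G(s):=\log L_S(s)-\delta\log\zeta(s)=\sum_p(\chi_S(p)-\delta)p^{-s}+\bigl(R(s)-\delta R_\zeta(s)\bigr),
\]
gives the factorisation $L_S(s)=\zeta(s)^{\delta}H(s)$ with $H(s)=e^{G(s)}$ and $H(1)=\Gamma(\delta)\,c_0(S)$ by Theorem \ref{een}. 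The hypothesis \eqref{primecondition} is precisely what controls the first sum on the boundary: with $E(x)=\pi_S(x)-\delta\pi(x)=O_S(x\log^{-2-\rho}x)$, partial summation yields $\sum_p(\chi_S(p)-\delta)p^{-s}=s\int_2^\infty E(x)x^{-s-1}\,dx$, and since $E(x)x^{-2}$ and $E(x)x^{-2}\log x$ are integrable, both this integral and its $s$-derivative converge for $\Re(s)\ge1$. Hence $G$ is of class $C^1$ up to the line $\Re(s)=1$ near $s=1$, and in particular $G'(1)$ exists and is real.

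Existence of $\gamma_S$ is then immediate. From $\zeta'/\zeta(s)+1/(s-1)\to\gamma$ I would get
\[
\frac{L_S'}{L_S}(s)+\frac{\delta}{s-1}=\delta\Bigl(\frac{\zeta'}{\zeta}(s)+\frac{1}{s-1}\Bigr)+G'(s)\xrightarrow[s\to1^+]{}\delta\gamma+G'(1),
\]
so the defining limit exists with $\alpha=\delta>0$ and $\gamma_S=\delta\gamma+G'(1)\in\mathbb R$.

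For the counting function I would insert this factorisation into Perron's formula and deform to a Hankel contour around the branch point of $\zeta(s)^{\delta}$ at $s=1$. Writing $s=1+w/\log x$ and expanding $\zeta(s)^{\delta}=(s-1)^{-\delta}\bigl((s-1)\zeta(s)\bigr)^{\delta}$, $H(s)$ and $x^s/s$ to first order, the integrand becomes
\[
x\,H(1)\,(\log x)^{\delta}\,w^{-\delta}e^{w}\Bigl(1+\frac{w\,(\delta\gamma+G'(1)-1)}{\log x}+\cdots\Bigr),
\]
and the Hankel integrals $\tfrac1{2\pi i}\int w^{-\delta+k}e^{w}\,dw=1/\Gamma(\delta-k)$ give
\[
S(x)=\frac{H(1)}{\Gamma(\delta)}\,x\,(\log x)^{\delta-1}\Bigl(1+\frac{(\delta-1)(\delta\gamma+G'(1)-1)}{\log x}+\cdots\Bigr).
\]
With $c_0(S)=H(1)/\Gamma(\delta)$, the identity $\Gamma(\delta)/\Gamma(\delta-1)=\delta-1$, and $\gamma_S=\delta\gamma+G'(1)$, the first coefficient collapses to $c_1(S)=(\delta-1)(\gamma_S-1)=(1-\gamma_S)(1-\delta)$, as claimed.

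The main obstacle is exactly the analytic regularity of $H$, and it is what separates the two assertions. Under \eqref{primecondition} alone I only obtain boundary ($C^1$) regularity of $G$, so the contour deformation must be replaced by a Tauberian version of the argument using only the behaviour of $L_S(s)/\zeta(s)^{\delta}$ on $\Re(s)=1$; this still delivers the leading term and the single correction, but forces the weaker error $o_S(1)$ in the bracket. When the primes of $S$ are, up to finitely many, a finite union of residue classes, $G$ can instead be expressed through logarithms of Dirichlet $L$-functions and so continues holomorphically into the classical zero-free region to the left of $\Re(s)=1$. There the full Selberg--Delange machinery applies: expanding $Z(s)=\bigl((s-1)\zeta(s)\bigr)^{\delta}H(s)$ to order $j$ at $s=1$ and retaining further terms in the Hankel expansion produces the complete series \eqref{starrie1}, with constants $c_2(S),\dots,c_j(S)$ determined by the higher derivatives of $Z$ and a genuine saving $O_{j,S}((\log x)^{-j-1})$.
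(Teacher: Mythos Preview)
Your proposal is a faithful sketch of the Selberg--Delange method and is essentially correct, including the key identification $\gamma_S=\delta\gamma+G'(1)$ and the reason why the two assertions require different amounts of regularity of $H$. Note, however, that the paper does not actually prove this theorem: immediately after the statement it simply cites Moree \cite[Theorem~4]{Mpreprint} for the first assertion (under \eqref{primecondition}) and Serre \cite[Th\'eor\`eme~2.8]{serre} for the full expansion \eqref{starrie1}. So there is no ``paper's own proof'' to compare against; what you have written is, in outline, precisely the Selberg--Delange argument that underlies those references (Serre's theorem in particular is proved exactly via the Hankel-contour expansion you describe), and your proposal could serve as the self-contained justification that the paper omits.
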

There are many results in this spirit in the literature. For (\ref{primecondition}), see Moree \mbox{\cite[Theorem 4]{Mpreprint}}; for 
\eqref{starrie1} the reader can consult, for example, Serre \cite[Th\'eor\`eme~2.8]{serre}.

The multiplicative set $S_{d,a}$ associated to $\zeta_{\,d,a}(s)$ consists of the natural numbers having
only prime factors $p\equiv a\pmod*{d}$.
By the prime number theorem for arithmetic progressions \eqref{primecondition} 
is satisfied with $\delta=1/\varphi(d)$.
 Theorem \ref{thm:multiplicativeset} then yields
\begin{equation*}
S_{d,a}(x)=
\frac{c_0(S_{d,a})\,x}{\log^{1-\frac1{\varphi(d)}}x}
\Bigl(1+\frac{(1-\gamma(d,a))(1-\frac1{\varphi(d)})}{\log x}+\dots+
\frac{c_j(S)}{\log^j x}+O_{j,d}\Bigl(\frac{1}{\log^{j+1}x}\Bigr)\Bigr),
\end{equation*}
where the implicit constant depends at most on $j$ and $d$.
\begin{Ex} Using Theorem \ref{een} it can be shown that
$c_0(S_{4,1})=0.32712936\ldots$ 
and 
$c_0(S_{4,3})=0.48651988\ldots$,
and hence $S_{4,3}(x)\ge S_{4,1}(x)$
for every $x$ sufficiently large. Moree \cite{bias} showed that $S_{4,3}(x)\ge S_{4,1}(x)$ for \emph{every} $x$.
Further, 
$c_0(S_{3,1})=0.30121655\ldots$ 
and 
$c_0(S_{3,2})=0.70449843\ldots$ 
and so $S_{3,2}(x)\ge S_{3,1}(x)$
for every $x$ sufficiently large. Moree \cite{bias} 
also showed that this inequality holds for \emph{every} $x$.
In contrast, denoting by $\pi(x;d,a)$ the number of primes $p\le x$, $p\equiv a \pmod*{d}$,
it is well-known that the quantities
$\pi(x;4,3)-\pi(x;4,1)$ and $\pi(x;3,2)-\pi(x;3,1)$, although predominantly 
positive (Chebyshev's bias, see 
Granville and Martin \cite{GM}), change sign infinitely often. For related material 
see Moree \cite[Sect.\,6]{LvR}.
\end{Ex}
Theorem \ref{thm:multiplicativeset} allows us to determine
whether the \emph{Landau approximation}, $c_0(S)\,x\log^{\delta-1}x$, or
the \emph{Ramanujan approximation}, $c_0(S)\,\int_2^{x}t\log^{\delta-1}t$, asymptotically better
approximates $S(x)$.
\begin{Cor}
If $\gamma_S<\frac12$, then asymptotically the Ramanujan approximation is better than 
the Landau one, if $\gamma_S>\frac12$ it is the other way around.
\end{Cor}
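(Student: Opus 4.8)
The plan is to expand both approximations one order beyond their common leading term $c_0(S)\,x\log^{\delta-1}x$ and to compare the two resulting error terms against the genuine second-order term of $S(x)$ supplied by Theorem \ref{thm:multiplicativeset}. Write $L(x)=c_0(S)\,x\log^{\delta-1}x$ for the Landau approximation and $R(x)=c_0(S)\int_2^x\log^{\delta-1}t\,\mathrm{d}t$ for the Ramanujan one. Theorem \ref{thm:multiplicativeset} gives
\begin{equation*}
S(x)=c_0(S)\,x\log^{\delta-1}x\Bigl(1+\frac{(1-\gamma_S)(1-\delta)}{\log x}\bigl(1+o_S(1)\bigr)\Bigr),
\end{equation*}
so the Landau error is read off immediately as
\begin{equation*}
S(x)-L(x)=c_0(S)(1-\gamma_S)(1-\delta)\,x\log^{\delta-2}x\,\bigl(1+o_S(1)\bigr).
\end{equation*}

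For $R(x)$ the first step is a single integration by parts, which yields $\int_2^x\log^{\delta-1}t\,\mathrm{d}t=x\log^{\delta-1}x+(1-\delta)\int_2^x\log^{\delta-2}t\,\mathrm{d}t+O(1)$; since $\int_2^x\log^{\alpha}t\,\mathrm{d}t=x\log^{\alpha}x\,(1+O(1/\log x))$ for any fixed $\alpha$, I obtain
\begin{equation*}
R(x)=c_0(S)\,x\log^{\delta-1}x\Bigl(1+\frac{1-\delta}{\log x}+O\Bigl(\frac1{\log^2 x}\Bigr)\Bigr).
\end{equation*}
Subtracting this from the expansion of $S(x)$ and noting that the two copies of $\frac{1-\delta}{\log x}$ partially cancel leaves, after using $(1-\gamma_S)(1-\delta)-(1-\delta)=-\gamma_S(1-\delta)$,
\begin{equation*}
S(x)-R(x)=-c_0(S)\,\gamma_S(1-\delta)\,x\log^{\delta-2}x\,\bigl(1+o_S(1)\bigr).
\end{equation*}

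Finally, comparing the two errors reduces, after dividing out the common positive factor $c_0(S)(1-\delta)x\log^{\delta-2}x$ (positive because $c_0(S)>0$ and $0<\delta<1$), to comparing $|1-\gamma_S|$ with $|\gamma_S|$. Since $|\gamma_S|<|1-\gamma_S|$ holds exactly when $\gamma_S<\tfrac12$, the Ramanujan approximation has the asymptotically smaller error precisely in that range, while the Landau approximation wins when $\gamma_S>\tfrac12$, as claimed.

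The only step requiring genuine care is the integration by parts for $R(x)$: one must extract the secondary term $(1-\delta)/\log x$ correctly and confirm that the tail $\int_2^x\log^{\delta-2}t\,\mathrm{d}t$ contributes at order $x\log^{\delta-2}x$ rather than being swallowed by the error. Everything else is a comparison of explicit leading coefficients, which is decisive whenever they are nonzero; in the boundary cases $\gamma_S\in\{0,1\}$ one of the two errors drops to strictly lower order, which only reinforces the respective conclusion, and the tie value $\gamma_S=\tfrac12$ (equal leading errors) is precisely the case excluded by the strict inequalities in the statement.
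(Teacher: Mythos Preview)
Your proof is correct and follows exactly the natural route the paper has in mind: the paper does not spell out a proof of this corollary, treating it as an immediate consequence of Theorem~\ref{thm:multiplicativeset} (with a pointer to \cite{LvR} for context), and your argument---expanding $S(x)$, $L(x)$ and $R(x)$ to second order and comparing the coefficients $|1-\gamma_S|$ and $|\gamma_S|$---is precisely how one makes that consequence explicit.
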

\noindent
For more details on the ``Landau versus Ramanujan'' theme, see Moree \cite{LvR}.

A large class of multiplicative sets is provided by the family of sets
\begin{equation}
    \label{setdefinition}
S_{f,q} =\{n:~q\nmid f(n)\},
\end{equation}
where $q$ is any prime and $f$ any integer-valued multiplicative function.
If $S$ is a set that in some sense is close to the set of all natural numbers, then $\gamma_S$ will be close to 
$\gamma$. This will be  
for example the case if $q$ in \eqref{setdefinition} is a large prime.
The following example demonstrates this.
\begin{Ex}
Put $S_{\varphi,q}=\{n:~q\nmid \varphi(n)\}$, with $q\ge 3$ a prime.  
Ford et al.\,\cite{FLM} established that asymptotically
$\gamma_{S_{\varphi,q}}=
\gamma+O_{\varepsilon}(q^{\varepsilon-1})$ 
and related $\gamma_{S_{\varphi,q}}$ to the Euler constants of the cyclotomic fields $\mathbb Q(\zeta_q)$.  
\end{Ex}
\begin{Ex}
The generalized divisor 
sum is defined by $\sigma_k(n)=\sum_{d\mid n}d^k$, where $k$ is any integer. 
Ciolan et al.\,\cite{CLM} studied the doubly infinite family 
$S_{\sigma_k,q}=\{n:~q\nmid \sigma_k(n)\}$, with $k$ an arbitrary integer and $q$ 
an arbitrary prime. They gave an explicit formula for $\gamma_{S_{\sigma_k,q}}$ 
and determined the first and second order behavior of $S_{\sigma_k,q}(x)$, 
refining work of Rankin \cite{Rankin61}, who had deterimined the asymptotic behavior. 
The motivation for studying the divisor sums is that 
there are many interesting congruences relating 
them to Fourier coefficients, the most famous being
$\tau(n)\equiv \sigma_{11}(n)\pmod*{691}$, where $\tau$ denotes Ramanujan's tau-function.
\end{Ex}
The sets $S$ in the examples above, and indeed in many considered in the literature, are abelian.
 \begin{Def}
A multiplicative set $S$ is called \textbf{abelian} if the prime numbers in it, with
finitely many exceptions, are in a finite union of arithmetic progressions.
\end{Def}
By the Chinese Remainder Theorem we can find an integer $d$ such that the primes are, 
with finitely many exceptions, those in a number of primitive residue classes modulo $d$. 
Corollary \ref{Corollary-abelian} then shows 
that the computation of $\gamma_S$ is reduced to computation of
$\gamma(d,a)$ for appropriate $a$.

We note that $S_{f,q}$ is abelian if $f(p)$ is polynomial in $p$, e.g., $\varphi(p)=p-1$ and $\sigma_k(p)=1+p^k$.

\section{Properties of \texorpdfstring{$\gamma(d,a)$}{gda} and some special cases}
\label{sec:properties-specialcases}
\subsection{Properties of \texorpdfstring{$\gamma(d,a)$}{gda}}
\label{sec:gammaproperties}
On noting that 

$$
\prod_{\substack{a=1\\ (d,a)=1}}^{d-1}\zeta_{\,d,a}(s)=\zeta(s)\prod_{p\mid d}(1-p^{-s}),
$$
by taking the logarithmic derivative of both sides and letting $s\to 1^+$, we obtain
\begin{equation}
\label{gammasum}    
\sum_{\substack{a=1\\ (d,a)=1}}^{d-1}\gamma(d,a)=\gamma + \sum_{p\mid d}\frac{\log p}{p-1}.
\end{equation}
This identity also follows directly from the limit result \eqref{nicelimit}. 
The sum on the right hand side is considered in extenso in Hashimoto et al.\,\cite{HKT}.

We assume that $d>1$ is \emph{odd}. It is easy
to relate $\gamma(d,a)$ with an Euler constant for modulus $2d$.
It is a consequence 
of \eqref{nicelimit} that
\begin{equation}
\label{twiced-relation}    
\gamma(d,2)=\gamma(2d,2+d)-\log 2.
\end{equation}
If $a\ne 2$, then
\begin{equation}
\label{twocases}    
\gamma(d,a)=
\begin{cases}
\gamma(2d,a)& \text{~if~}2\nmid a;\\
\gamma(2d,a+d)& \text{~if~}2\mid a.
\end{cases}
\end{equation}
In this way, for example, one gets $\gamma(10,1)=\gamma(5,1)$,
$\gamma(10,3)=\gamma(5,3)$,  $\gamma(10,7)=\gamma(5,2)+\log 2$
and  $\gamma(10,9)=\gamma(5,4)$.

For $d=1$ identity \eqref{twocases} does
not hold; then
we have $\gamma(2,1) =\gamma(1,1) + \log 2=\gamma+\log 2$. 
These identities together with \eqref{twocases} show that in considerations 
of $\gamma(d,a)$, without loss of generalization we may assume that $d\not\equiv 2\pmod*{4}$.

\subsection{The case \texorpdfstring{$a=1$}{a=1}.}
For $d\ge 3$ we obtain 
\begin{align}
\gamma(d,1)
\label{gamma-a1}
= 
\gamma_1(d,1)
+
\sum^{d-1}_{\substack{b=2 \\(d,b)=1}}
\sum_{p\equiv b \pmod* d}  \frac{\log p}{p^{\nu_d(b)}-1}
=\gamma_1(d,1)
-
\sum^{d-1}_{\substack{b=2 \\(d,b)=1}}
\frac{\zeta'_{\,d,b}}{\zeta_{\,d,b}}(\nu_d(b)),
\end{align}
The first equality follows from Theorem \ref{mainabelian} and \eqref{compacttwo} on
noting that $\nu_d(1)=1$, $e_1=2$ and $\nu_d(b)=e_b$ for every $b\ge 2$ 
satisfying $(d,b)=1$.
The second equality immediately follows by 
using the definition of $\zeta_{d,a}(s)$ to get
\[
\log\zeta_{\,d,a}(s)=-\sum_{p\equiv a\pmod*{d}} \log(1-p^{-s}),
\]
and then by differentiating both sides.

\subsection{The case \texorpdfstring{$d=3$}{d3}.}
\label{sec:d3}
By \eqref{gamma-a1}
we immediately have
\begin{align*}
\gamma(3,1) & 
= \gamma_1(3,1)
+ \!\!\!\sum_{p \equiv 2 \pmod*{3}} \frac{\log p}{p^2-1}
=
 \gamma -  \frac{\log 3}{2} -3 \log\Gamma\bigl(\frac13\bigr) + 2\log(2\pi) 
 + \!\!\!\sum_{p \equiv 2 \pmod*{3}} \frac{\log p}{p^2-1},
 \end{align*}
 where $\Gamma$ denotes Euler's gamma function, $\gamma_1(3,1)$ is defined in \eqref{gamma1-def},
 $\chi_{-3}$ denotes the quadratic character modulo $3$ and we used
 \[
\frac{L'}{L}(1,\chi_{-3})
=
\gamma-6\log\Gamma\bigl(\frac13\bigr) -\frac{3}{2} \log 3 + 4\log(2\pi),
\]
which can be obtained using classical formulae, see, e.g., (8) and 
(10)-(12) in \cite{Languasco2021a}, or \cite{LanguascoR2021}.
Moreover, recalling \eqref{gammasum}, we also obtain 
 \begin{align*}
\gamma(3,2) 
&=  
\gamma_1(3,2)
- \sum_{p \equiv 2 \pmod*{3}} \frac{\log p}{p^2-1}
 =
 \log 3 + 3 \log\Gamma\bigl(\frac13\bigr) 
 - 2\log(2\pi)- \sum_{p \equiv 2 \pmod*{3}} \frac{\log p}{p^2-1},
\end{align*}
where $\gamma_1(3,2)$ is defined in \eqref{gamma1-def}.
The prime sum  in the previous formulae for $\gamma(3,1), \gamma(3,2)$
can be computed with a large precision by using 
\[
\sum_{p\equiv 2\pmod*{3}}\frac{\log p}{p^{2}-1}
=
\frac{1}{2} 
\sum_{j=1}^{J}\bigg(
\frac{L'}{L}(2^j,\chi_{-3})
-
\frac{\zeta'}{\zeta}(2^j)
-\frac{\log 3}{3^{2^j}-1}\bigg)
+
\sum_{p\equiv 2 \pmod*3} \frac{\log p}{p^{2^{J+1}}-1}
\]
which follows from recursively applying $J$ times the result in \cite[eq.~(64)]{CLM}.
Letting $\Delta\in \mathbb N$, it is not hard to prove that the 
last sum in the previous formula is $<10^{-\Delta}$ for 
\[J > \bigl(\Delta\log 10 + \log(\log 4 + 4 \log 3)\bigr)/\log 4.\]
Moreover, 
%
using again \cite[eq.~(66)]{CLM}, we can also obtain
\begin{equation*}
\frac{L'}{L}(2^j,\chi_{-3})
=
-\log 3 + \frac{\zeta^\prime (2^j,1/3) - \zeta^\prime (2^j,2/3) }
{ \zeta(2^j,1/3)-\zeta(2^j,2/3)},
\end{equation*}
where $\zeta(s,x)$ denotes the Hurwitz zeta function 
and $\zeta^\prime(s,x)=\frac{\partial}{\partial s}\zeta(s,x)$.
These formulae allowed us to compute both $\gamma(3,1)$ and $\gamma(3,2)$ with  
$130\, 000$ decimals\footnote{The computation required about 16 days 
using 240 GB of RAM on a 4 x Eight-Core Intel(R) Xeon(R) CPU E5-4640 @ 2.40GHz
machine of the cluster of the Dipartimento di Matematica of the University of Padova.}.
Moreover, see \urltwo{https://oeis.org/A073005}{OEIS-A073005},
\[
\Gamma\bigl(\frac{1}{3}\bigr) 
=
\frac{2^{7/9} \pi^{2/3}}{3^{1/12} \bigl(M\bigr(2, \sqrt{2+\sqrt{3}}\bigr)\bigr)^{1/3}},
\]
where $M(a,b)$ (with $a,b>0$) denotes the \emph{arithmetic-geometric mean} (AGM) of $a$ 
and $b$.
Using the AGM algorithm
one can obtain $\Gamma(\frac13)$ with a billion decimals
in less than a hour of computation time.
On the \urltwo{http://www.numberworld.org/digits/}{numberworld\_website} it is reported 
that $2\cdot 10^{11}$ decimals of $\Gamma(\frac13)$ are currently known.

\subsection{The case \texorpdfstring{$d=4$}{d4}}
\label{sec:d4} This case is closely related to the 
historically important example where $S$ is 
taken to be the set of sums of two squares, cf.\,Berndt and Moree \cite{BeMo}.
Since $S$ is generated by the prime $2$, the primes $1\pmod*{4}$ and the squares of the primes $3\pmod*{4}$, 
for $\Re(s) > 1$ we obtain
\begin{equation}
\label{basicgenerating}
L_S(s)= (1-2^{-s})^{-1} \zeta_{\,4,1}(s) \zeta_{\,4,3}(2s). 
\end{equation}
Denoting by $\chi_{-4}$ the quadratic character modulo $4$ and recalling
that
$$L(s,\chi_{-4})=\prod\limits_{p\equiv 1 \pmod*{4}}(1-p^{-s})^{-1}
\prod\limits_{p\equiv 3 \pmod*{4}}(1+p^{-s})^{-1},$$
one verifies that
\begin{equation*}
L_S(s)^2=\zeta(s)L(s,\chi_{-4})(1-2^{-s})^{-1}
\zeta_{\,4,3}(2s),
\end{equation*}
leading to
\begin{equation}
\label{theanswer}    
2\gamma_S=\gamma+\frac{L'}{L}(1,\chi_{-4})-\log 2-\sum_{p\equiv 3\pmod*{4}}\frac{2\log p}{p^{2}-1}.
\end{equation}
By the prime number theorem for arithmetic 
progressions \eqref{primecondition} is satisfied with
 $\delta=\frac12$ and hence 
 $c_1(S)=(1-\gamma_S)/2$ by Theorem \ref{thm:multiplicativeset},
 which
on combination with \eqref{theanswer} gives a
formula for $c_1(S)$ due to Shanks \cite{Shanks}. 
He showed that $c_1(S)\approx 0.581948659$ and hence
$\gamma_S\approx -0.163897318$.
The constant $c_1(S)$ is now called \emph{Shanks' constant}
(see also \urltwo{https://oeis.org/A227158}{OEIS-A227158}).
Currently it can be computed with much higher precision: the
record is \numSconstantdecimals decimal digits \cite{Sconstant} (due to the first author).

Several mathematicians in the 19th century independently discovered the identity
\begin{equation*}
\frac{L'}{L}(1,\chi_{-4})=\gamma-\log 2-2\log G,
\end{equation*}
where $$G=\frac{1}{ M(1,\sqrt{2})}=\frac{2}{\pi}\int_0^1 \frac{dx}{\sqrt{1-x^4}}=0.8346268416740731862814297\dotsc$$
The constant $G$ is now named 
\emph{Gauss's constant} and is
also related to various other constants
\cite{Gaussconstant}.
We infer that 
\eqref{theanswer} can be rewritten as
\begin{equation}
\label{theanswer2}    
\gamma_S=\gamma-\log G-\log 2-\sum_{p\equiv 3\pmod*{4}}\frac{\log p}{p^{2}-1}.
\end{equation}

As shown in Ciolan et al.\,\cite[Section~10.2]{CLM}
one has
\[
\sum_{p\equiv 3\pmod*{4}}\frac{\log p}{p^{2}-1}
=
\frac{1}{2} 
\sum_{j=1}^{J}\bigg(
\frac{L'}{L}(2^j,\chi_{-4})
-
\frac{\zeta'}{\zeta}(2^j)
-\frac{\log 2}{2^{2^j}-1}\bigg)
+
\sum_{p\equiv 3 \pmod*4} \frac{\log p}{p^{2^{J+1}}-1},
\]
which allows one to compute the left hand term with high accuracy 
by choosing $J$ sufficiently large and using that
\begin{equation}
\label{quadlogder-at2^k}
\frac{L'}{L}(2^j,\chi_{-4})
=
-2\log 2 + \frac{\zeta^\prime (2^j,1/4) - \zeta^\prime (2^j,3/4)}
{\zeta(2^j,1/4)-\zeta(2^j,3/4)}
\end{equation}
which follows from \cite[eq.~(66)]{CLM}
(which is valid for every integer $q\ge 3$).

In fact the right hand side of \eqref{quadlogder-at2^k}
equals the logarithmic derivative at $2^j$ of the \emph{Dirichlet $\beta$-function}
$\beta(s) = 4^{-s}(\zeta(s,\frac14)-\zeta(s,\frac34))$, $\Re(s) > 1$.

Exploiting the computations described in Languasco \cite[Section~10]{CLM}, see also \cite{Sconstant}, 
we were able compute $130\, 000$ decimals for $G$.
But using directly the AGM algorithm one can obtain $G$ with $10^6$ of decimals 
in less than a second of computation time and with $10^9$ of decimals in about
$40$ minutes of computation time\footnote{Results obtained using Pari/GP v.2.15.5
on an Intel i7-13700KF machine, with 64GB of RAM.}. Moreover, recalling 
that $G=\Gamma(\frac14)^2(2\pi)^{-3/2}$, and exploiting that $31\,415\,926\,535\,897$ decimals of $\pi$ 
and $362\,560\,990\,822$ of $\Gamma(\frac14)$ 
are known (see the \urltwo{http://www.numberworld.org/digits/}{numberworld\_website}),
we can obtain the same number of decimals for $G$ too.
For more information on $G$, see also \urltwo{https://oeis.org/A014549}{OEIS-A014549}.

{}From \eqref{basicgenerating} we infer that 
$$\gamma_S=-\log 2 + \gamma(4,1)-\sum_{p\equiv 3\pmod*{4}}\frac{2\log p}{p^{2}-1}.$$
This in combination with \eqref{theanswer2} 
and \eqref{gammasum} yields
$$\gamma(4,1)=\gamma-\log G +\sum_{p\equiv 3\pmod*{4}}\frac{\log p}{p^{2}-1},
\quad 
\gamma(4,3)=\log G + \log 2 -\sum_{p\equiv 3\pmod*{4}}\frac{\log p}{p^{2}-1}.
$$
{}From  our remarks above it is clear that both
can be obtained with $130\,000$ decimal precision.

\section{How to compute \texorpdfstring{$\gamma(d,a)$}{gda} in the general case}
\label{sec:algorithm}

The following proposition expresses $\gamma(d,a)-\gamma_1(d,a)$ in terms
of the logarithmic derivatives of Dirichlet $L$-function at integral
points $m$, $m\ge 2$.
By $\nu(\chi)$ we will denote the order of the
Dirichlet character $\chi$ as an element of the multiplicative
group of Dirichlet characters having modulus $d$. It is easy to see
that $\nu(\chi^j)= \nu(\chi)/ (j, \nu(\chi))$ for every $j\ge 1$.
\begin{Prop}
\label{Prop-Sk-moebius}
Let $a$ and $d\ge 2$ be coprime integers.
Then
\begin{equation}
\label{Sk-moebius}
\gamma(d,a) 
=
\gamma_1(d,a)
- \frac{1}{\varphi(d)}
\sum_{\chi\ne \chi_0}{\overline{\chi}}(a) 
\sum_{j \ge 1}
\mu(j)\!\!\!
\sum_{\substack{k \ge 2 \\ k \not\equiv 1 \pmod*{\nu(\chi^j)}}}
\!\!\!
\Bigl(
\frac{L'}{L}(kj,\chi^{kj})
-
\frac{L'}{L}(kj,\chi^{j})
\Bigr).
\end{equation}
\end{Prop}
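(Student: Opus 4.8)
The plan is to resume the computation from the point in the proof of Theorem~\ref{mainabelian} where we had shown
\[
\gamma(d,a)-\gamma_1(d,a) = -\sum_{p\equiv a\pmod{d}}\log p\sum_{k\ge2}\frac1{p^k} + \sum_{\substack{n\equiv a\pmod{d}\\ n\text{ composite}}}\frac{\Lambda(n)}{n},
\]
but \emph{before} the geometric series is summed. Since the composite $n\equiv a\pmod{d}$ with $\Lambda(n)\neq0$ are precisely the prime powers $p^k$ with $k\ge2$ and $p^k\equiv a\pmod{d}$, this identity rewrites as a single sum over $k$:
\[
\gamma(d,a)-\gamma_1(d,a) = \sum_{k\ge2}\Bigl(\sum_{p^k\equiv a\pmod{d}}\frac{\log p}{p^k} - \sum_{p\equiv a\pmod{d}}\frac{\log p}{p^k}\Bigr).
\]

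Next I would introduce characters. Writing $P_\psi(s)=\sum_p\psi(p)(\log p)p^{-s}$ for the prime part attached to a character $\psi$ modulo $d$, and detecting each congruence by the orthogonality relation (which for $(a,d)=1$ gives $\varphi(d)^{-1}\sum_\chi\overline\chi(a)\chi(m)=1$ when $m\equiv a\pmod{d}$ and $0$ otherwise), the identity $\chi(p^k)=\chi^k(p)$ turns the two inner sums into
\[
\sum_{p^k\equiv a}\frac{\log p}{p^k}=\frac1{\varphi(d)}\sum_\chi\overline\chi(a)\,P_{\chi^k}(k),\qquad \sum_{p\equiv a}\frac{\log p}{p^k}=\frac1{\varphi(d)}\sum_\chi\overline\chi(a)\,P_{\chi}(k).
\]
The principal character contributes $P_{\chi_0^k}(k)-P_{\chi_0}(k)=0$ since $\chi_0^k=\chi_0$, so the outer sum may be restricted to $\chi\neq\chi_0$.

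The key step is to express the prime parts through logarithmic derivatives. From
\[
-\frac{L'}{L}(s,\psi)=\sum_p\sum_{m\ge1}\psi(p)^m(\log p)p^{-ms}=\sum_{m\ge1}P_{\psi^m}(ms),
\]
a Möbius inversion, obtained by replacing $\psi$ with $\psi^j$ and $s$ with $js$ and then collapsing $\sum_{j\mid n}\mu(j)=[n=1]$, gives $P_\psi(s)=-\sum_{j\ge1}\mu(j)\frac{L'}{L}(js,\psi^j)$. Applying this to $\psi=\chi^k$ and to $\psi=\chi$ at $s=k$, subtracting, and interchanging the $k$- and $j$-summations produces, up to the factor $-1/\varphi(d)$, the double sum $\sum_{j\ge1}\mu(j)\sum_{k\ge2}\bigl(\frac{L'}{L}(kj,\chi^{kj})-\frac{L'}{L}(kj,\chi^j)\bigr)$. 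Finally I would drop the vanishing terms: $\chi^{kj}=\chi^j$ as characters exactly when $(\chi^j)^{k-1}=\chi_0$, i.e. $\nu(\chi^j)\mid(k-1)$, i.e. $k\equiv1\pmod{\nu(\chi^j)}$; restricting the inner sum to $k\not\equiv1\pmod{\nu(\chi^j)}$ yields \eqref{Sk-moebius}.

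The only real obstacle is justifying the two interchanges of summation (in the inversion and in the final reordering); the algebra is otherwise formal. Here the saving grace is that every $L$-value that occurs is evaluated at an argument $kj\ge2$, where
\[
\Bigl|\frac{L'}{L}(\sigma,\psi)\Bigr|\le\sum_{n\ge2}\frac{\Lambda(n)}{n^\sigma}=-\frac{\zeta'}{\zeta}(\sigma)\le 4\cdot2^{-\sigma}\Bigl(-\frac{\zeta'}{\zeta}(2)\Bigr)
\]
decays geometrically. As there are only $\varphi(d)$ characters and $\sum_{j\ge1}\sum_{k\ge2}2^{-kj}<\infty$, every rearranged series converges absolutely and Fubini's theorem applies. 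One should also record, as already noted before the statement, that $\nu(\chi^j)=\nu(\chi)/(j,\nu(\chi))$, which confirms that the divisibility condition governing the vanishing terms is exactly $k\equiv1\pmod{\nu(\chi^j)}$.
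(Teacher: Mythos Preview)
Your proof is correct and follows essentially the same route as the paper. Both arrive at the identity $\gamma(d,a)-\gamma_1(d,a)=\frac{1}{\varphi(d)}\sum_{\chi\ne\chi_0}\overline{\chi}(a)\sum_p\sum_{k\ge2}\frac{\log p}{p^k}(\chi(p)^k-\chi(p))$ (you via the intermediate step from the proof of Theorem~\ref{mainabelian}, the paper by rederiving it directly from the expansion of $\log L(s,\chi)$), then apply the same M\"obius inversion $P_\psi(s)=-\sum_{j\ge1}\mu(j)\frac{L'}{L}(js,\psi^j)$ twice and drop the vanishing terms; your explicit Fubini justification via the geometric bound on $|L'/L(\sigma,\psi)|$ is a welcome addition that the paper leaves implicit.
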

\begin{proof}
Let $s$ be a complex number with $\Re(s)>1$.
Using the Euler product for $L(s,\chi)$ and the Taylor formula we have
$$
\frac{1}{\varphi(d)}\sum_{\chi}{\overline{\chi}}(a)\log L(s,\chi)
=
\frac{1}{\varphi(d)}\sum_{\chi}{\overline{\chi}}(a)\sum_p\sum_{k \ge 1}\frac{\chi(p)^k}{kp^{ks}}.
$$
Moreover, by the Taylor formula and the orthogonality of Dirichlet characters, we also have
\begin{equation}
\label{logzeta_da_taylor}
\log \zeta_{\,d,a}(s) =
\frac{1}{\varphi(d)}\sum_{\chi}{\overline{\chi}}(a)\sum_p\sum_{k \ge 1}\frac{\chi(p)}{kp^{ks}}.
\end{equation}
Substraction gives
\[
\log \zeta_{\,d,a}(s)
-
\frac{1}{\varphi(d)}\sum_{\chi}{\overline{\chi}}(a)\log L(s,\chi)
=
-
\frac{1}{\varphi(d)}\sum_{\chi \ne \chi_0}{\overline{\chi}}(a)
\sum_p\sum_{k \ge 2}
\frac{\chi(p)^k-\chi(p)}{kp^{ks}},
\]
since the contribution  of the cases
$\chi=\chi_0$ and $(\chi\ne \chi_0$ and $k=1)$ are clearly both zero.\footnote{A similar expression was  obtained in \cite[Section~2]{lanzac10} 
in the context of  Mertens'
constants for arithmetic progressions.}
Differentiating and letting $s\to 1^+$,
we have
\begin{equation}
\label{Sk-explicit}
\gamma(d,a) 
-
\gamma_1(d,a)
=
\frac{1}{\varphi(d)}  
\sum_{\chi\ne \chi_0}{\overline{\chi}}(a) 
\sum_p\sum_{k \ge 2} \frac{\log p}{p^k} \bigl(\chi(p)^k-\chi(p) \bigr). 
\end{equation}
The inner double series in \eqref{Sk-explicit} is absolutely convergent 
and hence we can swap the order of summation.

We now recall that, by the M\"obius inversion formula, see, e.g., 
Belabas-Cohen \cite[Sect.\,5.5]{BelabasC2021},
for $\Re(s)>1$ we obtain 
\begin{equation}
\label{inversion}
  \sum_p \frac{\psi(p)}{p^s}
  =
  \sum_{j \ge 1}
    \frac{\mu(j)}{j} \log(L (sj, \psi^j)),
\end{equation}
where $\psi$ is a Dirichlet character modulo $d$.
Differentiating both sides of \eqref{inversion} with respect to $s$ yields
\begin{equation}
\label{Moebius-inversion-general}
\sum_{p} \frac{\psi(p) \log p}{p^s}
=
- \sum_{j\ge 1} \mu(j) \frac{L'}{L}(sj,\psi^{j}).
\end{equation} 
Inserting \eqref{Moebius-inversion-general} into \eqref{Sk-explicit}
twice (the first time using $s=k$ and $\psi = \chi^k$ and the second time using
$s=k$ and $\psi = \chi$), formula \eqref{Sk-moebius} follows immediately
by swapping again the order of summation and remarking that if
$k \equiv 1 \pmod*{\nu(\chi^j)}$, then $\chi^{kj} = \chi^j$.
\end{proof}

\subsection{An algorithm for computing \texorpdfstring{$\gamma(d,a)$}{gda}}
\label{sec:algorithm-bis}
In the general case, unfortunately, Shanks' acceleration procedure mentioned
in Sections \ref{sec:d3}-\ref{sec:d4} cannot be used. 
However, we can exploit the ideas from the proof
of Proposition \ref{Prop-Sk-moebius}, but now we also need to truncate the involved sums.
In fact, this procedure is the analogue of the one used in \cite{lanzac09,lanzac10b}
to evaluate Mertens' constants in arithmetic progressions.

The starting point is \eqref{Sk-explicit}, which for readability we repeat here:
\begin{align}
\notag
\gamma(d,a)
= \gamma_1(d,a)
+
\frac{1}{\varphi(d)}
\sum_p\sum_{k \ge 2}
\frac{\log p}{p^k}\sum_{\chi\ne \chi_0}{\overline{\chi}}(a) 
\bigl(\chi(p)^k-\chi(p) \bigr)
= \gamma_1(d,a) + S_1(d,a),
\end{align}
say.
The computation of $\gamma_1(d,a)$ is trivial, since it is  
a finite sum and hence it is enough to compute 
$L'/L(1,\chi)$ for $\chi \ne \chi_0 \pmod*{d}$,
in order to obtain its value\footnote{The Pari/GP implementation of Dirichlet $L$-functions works only
with primitive characters. In case $\chi \pmod*{d}$ is imprimitive,  
one needs to first find the associated primitive character $\chi^* \pmod*{f}$ (where $f\mid d$), and then 
insert a suitable correction term that involves
the values of $\chi^*$. For the logarithmic derivative $L'/L(s,\chi)$, $\chi\ne \chi_0$, 
$\Re(s) \ge 1$, such a term is
$\sum_{p \mid d} \frac{\chi^*(p)\, \log p}{p^s - \chi^*(p)}$.}, 
which we do exploiting the $L$-functions implementation of Pari/GP.

We explain now how to compute $S_1(d,a)$.
Let $P$ be a large integer to be specified later. 
Splitting the sum over $p$ in $S_1(d,a)$
we obtain
\begin{align}
\notag
S_1(d,a) &= 
\frac{1}{\varphi(d)}\
\sum_{p\le P}\sum_{k \ge 2}  
\frac{\log p}{p^k}\sum_{\chi\ne \chi_0}{\overline{\chi}}(a) 
\bigl(\chi(p)^k-\chi(p) \bigr)
\\
\notag
&\hskip2cm
+
\frac{1}{\varphi(d)}\
\sum_{p > P}\sum_{k \ge 2}
\frac{\log p}{p^k}\sum_{\chi\ne \chi_0}{\overline{\chi}}(a) 
\bigl(\chi(p)^k-\chi(p) \bigr)
\\
\notag
&=
S_2 (d, a, P) + S_3(d, a, P) ,
\end{align}
say. 
Using the geometric series of the ratios $\chi(p)/p$ and $1/p$,
it is not hard to see that 
\begin{equation}
\label{S3-comput}
S_2 (d, a, P) = 
\frac{1}{\varphi(d)}  
\sum_{\chi\ne \chi_0}{\overline{\chi}}(a) 
\sum_{p\le P} \frac{ \chi(p)\, \log p}{p}
\Bigl( 
\frac{\chi(p)}{p-\chi(p)}
-
\frac{1}{p-1}
\Bigr)
\end{equation}
can be easily computed.

In order to evaluate $S_3(d, a, P)$, we need to split the sum over $k$. 
Letting $K$ be a sufficiently large integer, we can write
\begin{align}
\notag
S_3(d, a, P)
&= 
\frac{1}{\varphi(d)}\
\sum_{p > P}\sum_{2 \le k \le K}
\frac{\log p}{p^k}  
\sum_{\chi\ne \chi_0}{\overline{\chi}}(a) 
\bigl(\chi(p)^k-\chi(p) \bigr)
\\
\notag
&\hskip2cm
+
\frac{1}{\varphi(d)}\
\sum_{p > P}\sum_{k > K}
\frac{\log p}{p^k} 
\sum_{\chi\ne \chi_0}{\overline{\chi}}(a) 
\bigl(\chi(p)^k-\chi(p) \bigr)
\\
\label{truncate-K}
&=
S_4(d, a, P) + E_1(d, a, P, K) ,
\end{align}
say. The error term $E_1(d, a, P, K)$ can be estimated as
\begin{align}
\notag
\vert E_1(d, a, P, K) \vert 
&\le
2\frac{(\varphi(d)-1)}{\varphi(d)}\sum_{p > P} \log p \sum_{k > K}\frac{1}{p^k} 
<
2\sum_{p > P} \frac{\log p}{p^K(p-1)}
\\
\label{E1-estim}&
<
2\frac{\log P}{P-1}\int_{P}^{+\infty} \frac{dt}{t^K}
<\frac{2}{(K-1)} \frac{\log P}{P^{K-1}(P-1)}.
\end{align}

We are left with estimating $S_4(d, a, P)$. 
We define
\begin{equation}
\label{truncate-P}
L_P(s,\chi) = L(s,\chi) \prod_{p\le P} \Big(1-\frac{\chi(p)}{p^s}\Big),\quad \Re(s)>1,
\end{equation}
and note that
%
\begin{equation}
\label{derlog-explicit}
\frac{L'_P}{L_P}(s,\chi)
=
\frac{L'}{L}(s,\chi)
+
\sum_{p\le P} \frac{\chi(p) \, \log p}{p^s - \chi(p)}.
\end{equation}
Arguing as in the proof of Proposition \ref{Prop-Sk-moebius},
we obtain 
\begin{equation}
\label{Moebius-inversion}
\sum_{p > P} \frac{\log p}{p^s}\bigl(\chi(p)^k-\chi(p)\bigr)
=
-\sum_{j\ge 1} \mu(j)
\Bigl(
\frac{L'_P}{L_P}(sj,\chi^{kj})
-
\frac{L'_P}{L_P}(sj,\chi^{j})
\Bigr).
\end{equation} 
Observe that if $k \equiv 1 \pmod*{\nu(\chi^j)}$ or $j$ is not a square-free number, then the 
corresponding summand 
at the right hand side of \eqref{Moebius-inversion} is zero.

Let  $J$ be a sufficiently large integer. Inserting  \eqref{Moebius-inversion} 
into $S_4(d, a, P)$ and splitting the sum over $j$, we have
\begin{align}
\notag
S_4(d, a, P)
&=
-\frac{1}{\varphi(d)}\ 
\!\!\!
\sum_{\chi\ne \chi_0}{\overline{\chi}}(a)  
\sum_{1 \le j \le J}
\mu(j)\!\!\! 
\sum_{\substack{2\le k \le K \\ k \not\equiv 1 \pmod*{\nu(\chi^j)}}}
\Bigl(
\frac{L'_P}{L_P}(kj,\chi^{kj})
-
\frac{L'_P}{L_P}(kj,\chi^{j})
\Bigr)
\\
\notag
&\hskip1cm
-\frac{1}{\varphi(d)}\ 
\sum_{\chi\ne \chi_0}{\overline{\chi}}(a) 
\sum_{2 \le k \le K}
\sum_{j > J}
\mu(j)
\Bigl(
\frac{L'_P}{L_P}(kj,\chi^{kj})
-
\frac{L'_P}{L_P}(kj,\chi^{j})
\Bigr)
\\
\label{truncate-J}
&
=S_5(d, a, P, K, J) + E_2 (d, a, P, K, J),
\end{align}
say. 
Remark that $S_5(d, a, P, K, J)$ contains only finite sums;
the values of $L'_P/L_P(s,\chi)$ occurring there
can be evaluated  exploiting the $L$-functions implementation of Pari/GP
and \eqref{derlog-explicit}. 

We now estimate $E_2 (d, a, P, K, J)$.
Using, for $\sigma=\Re(s) \ge 3$, that

\[
\Bigl\vert \frac{L'_P}{L_P}(s,\chi) \Bigr\vert
=
\Bigl\vert 
\sum_{p > P} \frac{\chi(p) \, \log p}{p^s - \chi(p)}
\Bigr\vert 
\le
\sum_{p > P} \frac{\log p}{p^\sigma - 1}
<
\sum_{p > P} \frac{1}{p^{\sigma - 1}}
<
\int_{P}^{+\infty} \frac{dt}{t^{\sigma - 1}}
= 
\frac{P^{2-\sigma}}{\sigma -2},
\]
we obtain
\begin{align}
\notag
\vert E_2 (d, a, P, K, J)\vert 
&\le
2\frac{(\varphi(d)-1)}{\varphi(d)}\sum_{2 \le k \le K}\sum_{j>J} \frac{P^{2- kj}}{kj -2}
<
2 P^2\sum_{2 \le k \le K}\frac{1}{k}\sum_{j>J} \frac{1}{P^{kj}}
\\
&
= 2 P^2\sum_{2 \le k \le K}\frac{1}{k(P^k-1)} \frac{1}{P^{kJ}}
<
2 P^2\sum_{k \ge 2} \frac{1}{P^{k(J+1)}}
\label{E2-estim}
= \frac{2}{P^{J-1}(P^{J+1}-1)}.
\end{align}

Summarising, we have proved that 
\[
\gamma(d,a) 
=
\gamma_1(d,a) +S_2 (d, a, P) + S_5(d, a, P, K, J) + E_1(d, a, P, K) + E_2 (d, a, P, K, J),
\]
where these quantities are defined and estimated, respectively, in \eqref{gamma1-def},
\eqref{S3-comput}, \eqref{truncate-J}, \eqref{truncate-K} and \eqref{truncate-J} again.

Let $\Delta \ge 2$ be an integer.
The estimates  \eqref{E1-estim} and \eqref{E2-estim}
allow us to ensure that 
$\vert E_1(d, a, P, K) \vert$ $+ \vert E_2 (d, a, P, K, J)\vert < 10^{-\Delta}$
for every $d\ge 3$, $a=1,\dotsc,d-1$ coprime to $d$ by choosing $P,K,J$ sufficiently large.
In order to obtain a good approximate value for $\gamma(d,a)$, it
is thus enough to compute the finite sums $\gamma_1(d,a)$, $S_2 (d, a, P)$ and $S_5(d, a, P, K, J)$
with a sufficiently large accuracy.

\subsection{The numerical results}
Using Pari/GP v.2.15.5
on an Intel i7-13700KF machine, with 64GB of RAM, 
we were able to compute the values of $\gamma(d,a)$,
$3\le d\le 100$,  $a=1,\dotsc,d-1$ coprime to $d$, with an accuracy of at least $100$ decimals
(about four hours and eight minutes
of computation time were required);
and $3\le d\le 500$,  $a=1,\dotsc,d-1$ coprime to $d$, with an accuracy of at least $20$ decimals
(four hours).
%
The values for $3\le d\le 1000$,  $a=1,\dotsc,d-1$ coprime to $d$,
computed with an accuracy of at least $10$ decimals required 
fifteen and a half hours.
Obtaining the complete set of results for larger values of $d$ would require a larger amount of memory.
Some of these cases are reported in Table \ref{tab:24}.
The cases having a large accuracy are the ones described in Sections
\ref{sec:d3}-\ref{sec:d4}.

Our computer program and the results
obtained are available on the webpage \url{http://www.math.unipd.it/~languasc/Euler_contants_AP.html}.
The relations \eqref{twiced-relation}-\eqref{twocases} 
were first used only to check
the correctness of the results, but
later on we decided to insert them
into the final computations, since their use leads to a
consistent reduction of the total computational time
(at the cost of requiring a larger amount of memory to store the $\gamma(d,a)$-values
already computed).

\begin{Rem}
\label{Remark-sums}
The ideas in Proposition \ref{Prop-Sk-moebius}
and in Section \ref{sec:algorithm-bis} can be also used, with some minor adaptation, 
to compute the four sums:
\begin{inparaenum}[(a)]
\item 
\label{firstsum}
$\sum_p \frac{\log p}{p^s(p-1)},$ 
\custspace
\item  
\label{secondsum}
$\sum\limits_{p\equiv a \pmod*{d}} \frac{\log p}{p^s(p-1)} \quad \textrm{for}\ \Re(s)>0,$ 
\custspace
\\
\item 
\label{thirdsum}
$\frac{\zeta'_{\,d,a}}{\zeta_{\,d,a}}(s)  = - \sum\limits_{p\equiv a \pmod*{d}}  \frac{\log p}{p^s-1},
\ \textrm{for}\ \Re(s)>1, $
\custspace
\item 
\label{fourthsum}
$\sum\limits_{p\equiv a \pmod*{d}} \frac{p^u\log p}{p^s-1},
\ \textrm{for}\ \Re(s)>\Re(u) + 1.$
\end{inparaenum}

Writing $s=\sigma+it$, $\sigma>0$, 
and respectively denoting with $\lfloor \sigma \rfloor$ and $\{\sigma\}$
the integral and the fractional part of $\sigma$,
the starting point for the evaluation of the sum \eqref{firstsum}
is the formula

\begin{align*}
\sum_p 
\frac{\log p}{p^s(p-1)}
&= 
\sum_p 
\frac{\log p}{p^{\{\sigma\}+it}}
\sum_{\ell\ge\lfloor \sigma \rfloor+1} \frac{1}{p^\ell}
=
\sum_{\ell\ge\lfloor \sigma \rfloor+1}
\sum_p 
\frac{\log p}{p^{\ell+\{\sigma\}+it}}
\\&=
\sum_{k\ge1}
\sum_p 
\frac{\log p}{p^{k+s}}
=
-\sum_{k \ge 1}
\sum_{j\ge 1}  \mu(j)
\frac{\zeta'}{\zeta} \bigl((k+s)j \bigr)
\quad (\Re(s)> 0),
\end{align*} 
which can be obtained by M\"obius inversion (or by using \eqref{Moebius-inversion-general} with $d=1$).

For the analogous sum  \eqref{secondsum}
a similar formula holds true, but in this case the values of $\frac{L'}{L}\bigl((k+s)j, \chi^j\bigr)$
are involved. 
Note that this second sum is present in  \eqref{compacttwo} with $s=\nu_d(b)$ and $e_b=\nu_d(b)$.

For sum  \eqref{thirdsum} 
the main formula is obtained using
\begin{equation*}
\frac{\zeta'_{\,d,a}}{\zeta_{\,d,a}}(s) =
-
\frac{1}{\varphi(d)}\sum_{\chi}{\overline{\chi}}(a)\sum_p\sum_{k \ge 1}\frac{\chi(p)\log p}{p^{ks}},
\end{equation*}
which follows by differentiating \eqref{logzeta_da_taylor}, and then, for every $k\ge 1$, by inserting the relation
\[
\sum_{p} \frac{\chi(p) \log p}{p^{ks}}
=
- \sum_{j\ge 1} \mu(j) \frac{L'}{L}(ksj,\chi^{j}),
\]
which follows from \eqref{Moebius-inversion-general} by taking $\psi=\chi$ 
and replacing $s$ with $ks$. 
One finally obtains
\[
\frac{\zeta'_{\,d,a}}{\zeta_{\,d,a}}(s) =
\frac{1}{\varphi(d)}\sum_{\chi}{\overline{\chi}}(a) \sum_{k \ge 1} \sum_{j\ge 1} \mu(j) \frac{L'}{L}(ksj,\chi^{j})
\quad
(\Re(s)>0).
\]

The  sum \eqref{fourthsum} is present in  \eqref{compacttwo} with
$u=\nu_d(b)-e_b$ and $s=\nu_d(b)$.
In order to evaluate it, it is enough to 
remark that  $\frac{p^u}{p^s-1}= \sum_{k \ge 1} p^{u-ks}$,
and then proceed as for $\frac{\zeta'_{\,d,a}}{\zeta_{\,d,a}}(s)$,
giving
\begin{align*}
\sum_{p\equiv a \pmod*{d}} \frac{p^u\log p}{p^s-1}
&=
 \sum_{k \ge 1} \sum_{p\equiv a \pmod*{d}}  \frac{\log p}{p^{ks-u}}
 =
 \frac{1}{\varphi(d)}\sum_{\chi}{\overline{\chi}}(a)\sum_p
 \sum_{k\ge 1}\frac{\chi(p)\log p}{p^{ks -u}}
 \\&
 =
 -  \frac{1}{\varphi(d)}\sum_{\chi}{\overline{\chi}}(a)
 \sum_{k\ge 1}
  \sum_{j\ge 1} \mu(j) \frac{L'}{L}\bigl((ks-u) j,\chi^{j}\bigr)
  \quad
  (\Re(s)>\Re(u) + 1).
\end{align*}

All the formulae presented in this remark 
could be used to compute $\gamma(d,a)$
using \eqref{compacttwo}; this is not a good idea, though, since in this
way we would not only need to compute the orders of the non-trivial
elements of 
$(\mathbb Z/d\mathbb Z)^*$, but we would also lose the cancellation of the terms corresponding 
to $k=1$ and 
$k\equiv 1 \pmod*{\nu(\chi^j)}$, which we exploited in the algorithm presented
in Section \ref{sec:algorithm-bis}.
\end{Rem}

\begin{Rem}
We also point out that $\zeta_{\,d,a}(s)$, $\Re(s)>1$, can be computed analogously by
making use of the  M\"obius inversion formula (for a more general setting see \cite{ERS, Accurate}).
This could be useful, for example, for evaluating the product in  \eqref{serrebeta}.

We remind ourselves that (see \eqref{logzeta_da_taylor}) 
\begin{equation}
\label{logzeta_da_taylor-2}
\log \zeta_{\,d,a}(s) =
- \sum_{p\equiv a \pmod*{d}} \log(1-p^{-s})
=
\frac{1}{\varphi(d)}\sum_{\chi}{\overline{\chi}}(a)\sum_p\sum_{k \ge 1}\frac{\chi(p)}{kp^{ks}}.
\end{equation}
On applying formula \eqref{inversion}, we then obtain 
\begin{equation}
\label{logzeta_inversion}
\log \zeta_{\,d,a}(s) =
\frac{1}{\varphi(d)}\sum_{\chi}{\overline{\chi}}(a)\sum_{k \ge 1} \frac{1}{k}
\sum_{j \ge 1} \frac{\mu(j)}{j} \log(L (ksj, \chi^j)).
\end{equation}

To transform formula \eqref{logzeta_inversion} in a procedure
capable of computing $\log \zeta_{\,d,a}(s)$ with a prescribed accuracy, 
one then needs, as in the algorithm explained in Section \ref{sec:algorithm-bis},
to accelerate the convergence rate by introducing
the parameter $P$ as in \eqref{truncate-P}, 
and to truncate the sums over $j,k$ into \eqref{logzeta_inversion}.
This can be done by following the procedure in  \cite[\S3]{lanzac09}; one just
needs to replace Lemma 1 there with the following slightly generalised version.
\begin{Lem}
Let $\chi \pmod*{d}$ be a Dirichlet character and $s\in \mathbb C$ such that $\sigma = \Re s >1$.
If $P \ge 1$ is an integer then
\[
  \bigl| \log \bigl( L_P(s,\chi) \bigr) \bigr|
  \le
  \frac{2 P^{1 - \sigma}}{\sigma - 1}.
\]
\end{Lem}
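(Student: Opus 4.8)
The plan is to exploit the Euler product for $L(s,\chi)$ to collapse $L_P(s,\chi)$ into a product over the primes exceeding $P$ only, and then estimate the resulting logarithm termwise. First I would recall that for $\sigma=\Re(s)>1$ the Euler product gives $L(s,\chi)=\prod_p(1-\chi(p)p^{-s})^{-1}$, which converges absolutely; substituting this into the definition \eqref{truncate-P} of $L_P$ cancels all the Euler factors with $p\le P$, leaving
\[
L_P(s,\chi)=\prod_{p>P}\bigl(1-\chi(p)p^{-s}\bigr)^{-1}.
\]
Taking the (principal-branch, series) logarithm of each factor — the same interpretation of $\log L_P$ already used throughout Section \ref{sec:algorithm-bis} — yields the absolutely convergent double series
\[
\log L_P(s,\chi)=\sum_{p>P}\sum_{k\ge 1}\frac{\chi(p)^k}{k\,p^{ks}}.
\]

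The second step is a routine termwise estimate. Since $|\chi(p)|\le 1$ and $|p^{ks}|=p^{k\sigma}$, the triangle inequality gives
\[
\bigl|\log L_P(s,\chi)\bigr|\le\sum_{p>P}\sum_{k\ge 1}\frac{1}{k\,p^{k\sigma}}=-\sum_{p>P}\log\bigl(1-p^{-\sigma}\bigr)\le\sum_{p>P}\frac{1}{p^{\sigma}-1},
\]
where the last inequality uses $-\log(1-x)\le x/(1-x)$ for $0<x<1$ with $x=p^{-\sigma}$.

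Finally I would convert this prime sum into the stated closed form. For every $p\ge 2$ and $\sigma>1$ one has $p^{\sigma}\ge 2^{\sigma}>2$, hence $p^{\sigma}-1\ge p^{\sigma}/2$, so that $1/(p^{\sigma}-1)\le 2/p^{\sigma}$. Dropping the restriction to primes and comparing with an integral,
\[
\sum_{p>P}\frac{1}{p^{\sigma}-1}\le 2\sum_{n>P}\frac{1}{n^{\sigma}}\le 2\int_{P}^{+\infty}\frac{dt}{t^{\sigma}}=\frac{2P^{1-\sigma}}{\sigma-1},
\]
which is exactly the claimed bound (and remains valid for $P=1$, where $L_1=L$). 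The computation here is entirely elementary, so I expect no genuine obstacle; the only point deserving a word of care is the interpretation of $\log L_P(s,\chi)$, which must be read as the convergent series above — consistent with \eqref{logzeta_da_taylor-2} and the analogous Lemma~1 of \cite{lanzac09} — rather than as an arbitrary branch of the logarithm. The mildest subtlety is checking that the inequality $p^{\sigma}-1\ge p^{\sigma}/2$ holds uniformly in $p$ and $\sigma$, after which the integral comparison delivers the precise constant~$2$.
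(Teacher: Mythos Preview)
Your proof is correct and follows essentially the same route as the paper: expand $\log L_P(s,\chi)$ as the double series over $p>P$ and $k\ge 1$, apply the triangle inequality, bound $-\log(1-p^{-\sigma})$ by $1/(p^{\sigma}-1)$, and finish via the integral comparison with the factor~$2$ coming from $m^{\sigma}-1\ge m^{\sigma}/2$. The only cosmetic difference is that the paper passes from primes to all integers $m>P$ before invoking $m^{\sigma}-1\ge m^{\sigma}/2$, whereas you do these in the opposite order; the estimates are otherwise identical.
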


\begin{proof}
By the triangle inequality
\[
  \bigl| \log \bigl( L_P(s,\chi) \bigr) \bigr|
  =
  \Bigl|
    \sum_{p > P}
      \sum_{k \ge 1} \frac{\chi^k(p)}{k p^{ks}}
  \Bigr|
  \le
  \sum_{p > P}
    \sum_{k \ge 1} \frac1{k p^{k \sigma}}
  \le
  \sum_{m > P} \frac1{m^\sigma-1}
  \le
  2
  \int_P^{+\infty} \frac{d t}{t^\sigma}
  =
  \frac{2P^{1 - \sigma}}{\sigma - 1},
\]
as required.
\end{proof}

Another  example in which this procedure is useful is in dealing with sums of the form
\[
\sum_{p\equiv a \pmod*{d}} \log(1+p^{-s})
=
\frac{1}{\varphi(d)}\sum_{\chi}{\overline{\chi}}(a)\sum_p\sum_{k \ge 1}\frac{(-1)^{k+1}\chi(p)}{kp^{ks}}.
\]
\end{Rem}

\section{An application: Euler constants of abelian number fields}
\label{sec:Dedekind}
Let $K$ be a number field, ${\mathcal O}$ its ring of integers and $s$ a complex variable.
Then for $\Re(s)>1$ the 
\emph{Dedekind zeta function} is defined as

\begin{equation*}
\zeta_K(s)=\sum_{\mathfrak{a}} \frac{1}{N{\mathfrak{a}}^{s}}
=\prod_{\mathfrak{p}}\frac{1}{1-N{\mathfrak{p}}^{-s}},
\end{equation*}
where $\mathfrak{a}$ ranges over 
the non-zero ideals in ${\mathcal O}$, 
$\mathfrak{p}$ ranges over the prime ideals in ${\mathcal O}$, and $N{\mathfrak{a}}$ denotes the 
\emph{absolute norm}
of $\mathfrak{a}$,
that is the index of $\mathfrak{a}$ in $\mathcal O$.
It is known that $\zeta_K(s)$ can be analytically continued to ${\mathbb C} \setminus \{1\}$,
and that it has a simple pole at $s=1$. 
The Euler constant  $\gamma_K$ of the number 
field $K$ (sometimes called Euler-Kronecker constant in the literature), is 
\begin{equation*}
\gamma_K=\lim_{s \to 1^+}\,\Bigl(\frac{\zeta'_K(s)}{\zeta_K(s)}+\frac{1}{s-1}\Bigr).
\end{equation*}
It can be written as a simple explicit expression involving a sum over the reciprocals of the non-trivial zeros of $\zeta_K(s)$, cf.\,Ihara \cite[pp.\,416-421]{Ihara}.
The following expression, cf.\,\eqref{nicelimit}, can be used to obtain a quick rough approximation
of $\gamma_K$ (for a proof see, e.g., Hashimoto et al.\,\cite{HIKW}):
\begin{equation*}
\gamma_K=\lim_{x\rightarrow \infty}\Bigl(\log x - \sum_{N\mathfrak{p}\le x}\frac{\log N\mathfrak{p}}{N\mathfrak{p}-1}\Bigr).
\end{equation*}
The terms in the sum involving the
norms of non-completely split primes converge and so the crucial contributions to the limit
comes from the completely splitting primes. In case $K$ is abelian, then by the 
Kronecker-Weber theorem $K$
is contained in some cyclotomic number field $\mathbb Q(\zeta_d)$, where we take $d$ to be
minimal. With finitely many exceptions, the primes that split completely are
precisely those in a union of arithmetic progressions modulo $d$. 
The upshot is that $\zeta_K(s)$ is of the form \eqref{abelianformat}   
and $\gamma_K$ satisfies an identity of 
the type \eqref{generalident}. In the particular case that $K=\mathbb Q(\zeta_d)$ we obtain 
$\gamma_K=\varphi(d)\,\gamma(d,1)+\frac{H'}{H}(1)$. Recall that
$\gamma(d,1)$ can be evaluated using \eqref{gamma-a1}. 
In case $d=q$ is a prime we obtain
\begin{equation*}
\gamma(q,1)=
\frac{1}{q-1}\Bigl(\gamma_q+\frac{\log q}{q-1} \Bigr)
+ 
\sum_{b = 2}^{q-1}
\sum_{p\equiv b \pmod* q}  \frac{\log p}{p^{\nu_d(b)}-1}.
\end{equation*}

There is an extensive literature on the average behavior of $\gamma_K$ in certain families, e.g.: \cite{FLM, Fouvry2013} (cyclotomic), \cite{MourtadaM2013} (quadratic).

For number fields there is also an analogue of Mertens' theorem, see, e.g., Rosen \cite{Rosen}, namely we have
$$\prod_{N\mathfrak{p}\le x}\Big(1-\frac{1}{N\mathfrak{p}}\Big)=\frac{e^{-\gamma}}{\alpha_K \log x}\Big(1+O_K\Big(\frac{1}{\log x}\Big)\Big),$$
where $\alpha_K$ is the residue of $\zeta_K(s)$
at $s=1$. This can be used for example to determine $C(d,1)$, see Ford et al.\,\cite[\S 2.3]{FLM}.

\begin{Ex}
We consider the number field $K=\mathbb Q(\zeta_7+\zeta_7^{-1})$ of degree 3.
By abuse of notation we will write $\zeta_{\,d,\pm a}(s)$ for
$\zeta_{\,d,a}(s)\,\zeta_{\,d,-a}(s)$. 
Elementary algebraic number theory shows that
$$\zeta_K(s)=(1-7^{-s})^{-1}\zeta_{\,7,\pm 1}(s)^3\zeta_{\,7,\pm 2}(3s)\zeta_{\,7,\pm 3}(3s).$$
Logarithmic derivation yields
\begin{equation}
\label{gammaK7}
\gamma_K=-\frac16\log 7+3\,\gamma(7,1)+3\,\gamma(7,6)-\sum_{p\equiv \pm 2,\pm 3 \pmod* 7}\frac{3\log p}{p^3-1}.
\end{equation}
The values of $\gamma(7,1)$ and $\gamma(7,6)$ 
can be computed with $100$ decimals, see Section \ref{sec:algorithm} and Table \ref{tab:24}. 
The prime sum is equal to
\[
- 3 
\Bigl(\frac{\zeta'_{\,7, 2}}{\zeta_{\,7, 2}}(3)
+
\frac{\zeta'_{\,7, 3}}{\zeta_{\,7, 3}}(3)
+
\frac{\zeta'_{\,7, 4}}{\zeta_{\,7, 4}}(3)
+
\frac{\zeta'_{\,7, 5}}{\zeta_{\,7, 5}}(3)
\Bigr)
\]
and hence it can be computed as described in Remark \ref{Remark-sums}
with the very same accuracy of 100 decimals.
The first forty digits are 
$\gamma_K=1.957156454449714752713821861425456626477\dotsc$

A way to verify the correctness of \eqref{gammaK7} is to insert in it the formulae
for $\gamma(7,1)$ and $\gamma(7,6)$ (stated in Section \ref{sec:d7}), 
leading to
$$\gamma_K = 
\gamma + \frac12 
\sum_{\chi\ne \chi_0}
(1+ \chi(-1)) \frac{L'}{L}(1,\chi) 
=
\gamma + \sum_{\substack{\chi(-1) = 1 \\ \chi\ne \chi_0}} \frac{L'}{L}(1,\chi)
=\gamma_7 - \sum_{\chi(-1) = -1} \frac{L'}{L}(1,\chi),
$$
a formula that, with $7$ replaced by any odd prime $q$, also holds in case $K=\mathbb Q(\zeta_q+\zeta_q^{-1})$
with $\gamma_q$ being the Euler-Kronecker constant of $\mathbb Q(\zeta_q)$, see, e.g., \cite[eqs.~(8) and (10)]{Msurvey}.
\end{Ex}

\section{A reinterpretation of a result of Serre \texorpdfstring{$(d=12)$}{d12}}
\label{sec:FouvrySerre}
In the introduction we mentioned an 
example by Serre \cite[pp.~185--187]{SerreChebotarev} and 
an analysis occurring in a 
paper of Fouvry, Levesque and Waldschmidt~\cite[pp.~80--84]{FLW} dealing with
$d=12$. Serre's example concerns Fourier coefficients of a certain eta-product and the work of Fouvry et al.\,simultaneous representation by 
two binary quadratic forms. We show that several of the results of the latter can be easily obtained from those of Serre.

Serre is interested in the lacunarity of the eta-product
$$f(z)=\eta(12z)^2=\sum_{n=1}^{\infty}a_nq^n=q\prod_{n=1}^{\infty}(1-q^{12n})^2.$$
We have $$f(z)=q-2q^{13}-q^{25}+2q^{37}+q^{49}+2q^{61}-2q^{97}-2q^{109}+q^{121}+2q^{157}+3q^{169}-2q^{181}+\ldots$$
Recall that a modular form $f$ is said to be \emph{lacunary} if the asymptotic density in $\mathbb N$ 
of the $n$ for which
$a_n(f)\ne 0$, is zero.
If $a$ and $d$ are two integers, we denote by $N_{a,d}$ any
positive integer that is either $1$ or composed of only primes $p\equiv a \pmod*{d}$.
Serre proves that $a_n$ is non-zero if and only if 
$n = (N_{5,12}N_{7,12} N_{11,12} )^2 N_{1,12}$.
Thus the associated set $\{n:a_n\ne 0\}$, call it ${\mathfrak M}$, 
equals $\{n:n=(N_{5,12}N_{7,12} N_{11,12} )^2 N_{1,12}\}$, which is multiplicative,
has $\delta=1/4$ and associated generating series 
$F(s)=\zeta_{12,5}(2s)\zeta_{12,7}(2s)\zeta_{12,11}(2s)\zeta_{12,1}(s)$.
By \eqref{starrie1} 
it then follows that, for arbitrary $j\ge 1$,
\begin{equation}
\label{starrie2}
\#\{\mathfrak M\cap [1,x]\}=\frac{x}{\log^{3/4}x}\Bigl(\beta_0+
\frac{\beta_1}{\log x}+\frac{\beta_2}{\log^2 x}+\dots+
\frac{\beta_j}{\log^j x}+O_j\Bigl(\frac{1}{\log^{j+1}x}\Bigr)\Bigr),
\end{equation}
with $\beta_1,\dotsc,\beta_j$ constants and $\beta_1$ satisfying $\beta_1=3\beta_0(1-\gamma_{\mathfrak M})/4$. Serre (Prop.\,19) shows that
\begin{equation}
\label{serrebeta}    
\beta_0=\frac{(2^{-1}3^{-7}\pi^6\log(2+\sqrt{3}))^{1/4}}{\Gamma(1/4)}\prod_{p\equiv 1\pmod*{12}}\sqrt{1-p^{-2}}=0.20154\ldots,
\end{equation}
where these days easily more decimals can be computed, giving 
$$\beta_0=0.2015440949047104252985132669867998104016\dotsc$$ 
Serre finds $\beta_0$ by relating
$F(s)$ to the Dedekind zeta function of $\mathbb Q(\zeta_{12})=\mathbb Q(i,\sqrt{3})$ and computing its residue.
It is very easy to go further and compute $\beta_1$ via $\gamma_{\mathfrak M}$, we have namely
$$\gamma_{\mathfrak M}=\gamma(12,1)-2\sum_{p\equiv 5,7,11\pmod*{12}}\frac{\log p}{p^2-1},$$
leading numerically to
$$\gamma_{\mathfrak M}=0.4218587490880596232477371177014182248836\dotsc$$ 
The asymptotic \eqref{starrie2} shows that $\eta(12z)^2$ is lacunary and quantifies the lacunarity\footnote{Serre \cite{serrelacunary} showed that $\eta^r$ with $r\ge 2$ an even integer is lacunary if and only if
$r\in \{2,4, 6, 8, 10, 14,26\}$.}.
Next we will discuss the connection of this result with \cite{FLW}.

It is classical that $n$ can be written as a sum of two
squares if and only if $n = 2^a N_{3,4}^2N_{1,4}$ for some 
$a\ge 0$. Likewise, $n$ is representable by $X^2+XY+Y^2$ if and only if $n = 3^b N_{2,3}^2N_{1,3}$ for some $b\ge 0$. It follows from this that $n$ is representable by both quadratic forms
if and only if $n = (2^a 3^b N_{5,12} N_{7,12} N_{11,12} )^2 N_{1,12}$.
The associated set $\mathfrak N$ is multiplicative and has $\delta=1/4$. By \eqref{starrie1} 
it then follows that, for arbitrary $j\ge 1$,
\begin{equation}
\label{starrie3}
\#\{\mathfrak N\cap [1,x]\}=\frac{x}{\log^{3/4}x}\Bigl(\beta'_0+
\frac{\beta'_1}{\log x}+\frac{\beta'_2}{\log^2 x}+\dots+
\frac{\beta'_j}{\log^j x}+O_j\Bigl(\frac{1}{\log^{j+1}x}\Bigr)\Bigr),
\end{equation}
with $\beta'_1,\dotsc,\beta'_j$ constants satisfying $\beta'_1=3\beta_0'(1-\gamma_{\mathfrak N})/4$.
We note that \eqref{starrie3} is \cite[(6.3)]{FLW}. Fouvry et al. 
continue their analysis by determining $\beta'_0$ using the generating series associated $G(s)$ to $\mathfrak N$,
which satisfies
$$G(s)=(1-4^{-s})^{-1}(1-9^{-s})^{-1}\zeta_{12,5}(2s)\zeta_{12,7}(2s)\zeta_{12,11}(2s)\zeta_{12,1}(s).$$
This can be cut short by comparing with Serre's work. Clearly 
$$G(s)=(1-4^{-s})^{-1}(1-9^{-s})^{-1}F(s),$$ giving rise to 
$\beta'_0=3\beta_0/2$ and 
$\gamma_{\mathfrak N}= \gamma_{\mathfrak M}- \frac23 \log2 - \frac14  \log 3$, leading
to the numerical values
$$\beta_0' = 0.3023161423570656379477699004801997156024\dotsc$$
and $$\gamma_{\mathfrak N} = 
- 0.3148924434522646725458956058346642466619\dotsc$$
Since $\zeta(2)=\pi^2/6$ we find
$$\prod_{p\equiv 5,7,11\pmod*{12}}\frac{1}{\sqrt{1-p^{-2}}}=\frac{\pi}{3}\prod_{p\equiv 1\pmod*{12}}\sqrt{1-p^{-2}}.$$
This in combination with \eqref{serrebeta} and $\beta'_0=3\beta_0/2$, then gives rise to the formula at the end of p.~83 of \cite{FLW}:
$$\beta'_0=\frac{3^{1/4}}{2^{5/4}}\sqrt{\pi}\frac{(\log(2+\sqrt{3}))^{1/4}}{\Gamma(1/4)}\prod_{p\equiv 5,7,11\pmod*{12}}\frac{1}{\sqrt{1-p^{-2}}}.$$

We leave it to the reader to
verify that Serre's criterion for $a_n$ to be non-zero can be reformulated in the following more elegant way.
\begin{Prop}
\label{surprisingidentity}
We have $a_n\ne 0$ if and only if $n\equiv 1 \pmod*{12}$
and $n$ is representable by both $X^2+Y^2$ and $X^2+XY+Y^2$.
\end{Prop}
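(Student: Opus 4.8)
The plan is to compare the two multiplicative descriptions already assembled in this section, so that the proposition becomes a matter of bookkeeping of prime factorizations rather than anything analytic. Serre's criterion identifies the set $\mathfrak M=\{n:a_n\ne 0\}$ with the integers of the shape $n=(N_{5,12}N_{7,12}N_{11,12})^2N_{1,12}$, while the discussion of simultaneous representability identifies the set $\mathfrak N$ of integers representable by both $X^2+Y^2$ and $X^2+XY+Y^2$ with those of the shape $n=(2^a3^bN_{5,12}N_{7,12}N_{11,12})^2N_{1,12}$. Taking $a=b=0$ shows $\mathfrak M\subseteq\mathfrak N$, so the statement reduces to proving that an element of $\mathfrak N$ lies in $\mathfrak M$ precisely when it is $\equiv 1\pmod*{12}$.

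First I would record the elementary fact that $(\mathbb Z/12\mathbb Z)^*$ has exponent $2$, so that $u^2\equiv 1\pmod*{12}$ for every $u$ coprime to $12$; concretely $5^2\equiv 7^2\equiv 11^2\equiv 1\pmod*{12}$. For the forward implication, let $n\in\mathfrak M$, say $n=M^2N_{1,12}$ with $M=N_{5,12}N_{7,12}N_{11,12}$ coprime to $12$. Then $M^2\equiv 1\pmod*{12}$, and since every prime dividing $N_{1,12}$ is $\equiv 1\pmod*{12}$ we also have $N_{1,12}\equiv 1\pmod*{12}$; hence $n\equiv 1\pmod*{12}$. Combined with $\mathfrak M\subseteq\mathfrak N$, this settles one direction.

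For the converse, suppose $n\in\mathfrak N$ and $n\equiv 1\pmod*{12}$, and write $n=(2^a3^bM)^2N_{1,12}$ as above. The congruence forces $n$ to be odd and coprime to $3$, so $2\nmid n$ and $3\nmid n$; since $M$ and $N_{1,12}$ are already coprime to $6$, the only possible source of the primes $2$ and $3$ is the factor $(2^a3^b)^2$, and this is possible only when $a=b=0$. Then $n=M^2N_{1,12}$ is exactly of Serre's shape, so $n\in\mathfrak M$, and the equivalence follows.

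The step I would flag as the crux is the observation that the only factors distinguishing $\mathfrak N$ from $\mathfrak M$ are the even powers of $2$ and of $3$ carried by the term $(2^a3^b)^2$, and that these are precisely the factors excluded by the residue condition $n\equiv 1\pmod*{12}$. Once this is isolated the two implications are immediate; no estimate or limit from the earlier sections is needed.
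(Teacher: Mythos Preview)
Your argument is correct and is exactly the verification the paper has in mind: it explicitly leaves the proposition to the reader, and the intended check is the comparison of the two prime-factorization descriptions of $\mathfrak M$ and $\mathfrak N$ already recorded in the section, together with the observation that the extra factor $(2^a3^b)^2$ is ruled out precisely by the condition $n\equiv 1\pmod*{12}$. Nothing further is needed.
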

Serre justifies his result by finding Hecke theta series for $\eta(12z)^2$ using Galois representations.
Huber et al.\,\cite{veel} reproved it using the theory of theta identities
(for the standard work relating eta products and theta identities, see
K\"ohler \cite{Kohl}), taking the identity
$$\eta(12z)^2=\frac12\sum_{n=1}^{\infty}\Big(\sum_{x^2+36y^2=n}1-
\sum_{4x^2+9y^2=n}1\Big)q^n.$$
as a starting point.
The elementary formulation of  Proposition \ref{surprisingidentity} suggests that perhaps 
an even more basic proof using a combinatorial approach \emph{\`a la} Ramanujan is 
possible!

\medskip
\noindent \textbf{Acknowledgment}. 
The second author thanks Heng Huat Chan, Henri Cohen, Bernhard Heim, Jens Funke, Lukas Mauth, Eric Mortenson, Robert Pollack, Jean-Pierre Serre, Tonghai Yang (who pointed out the article \cite{veel}), Don Zagier and 
Sander Zwegers for helpful feedback regarding 
Section \ref{sec:FouvrySerre} and the Max-Planck-Institute for Mathematics for making interactions with so many experts possible.

Part of the computations were performed on an assembled machine that uses an
Intel Core i7-13700KF, 5.4 GHz running Ubuntu 22.04.04 and with 64GB of RAM. 
Thanks are due to Luca Righi and Alessandro Talami (I.T. services of the University of Padova) 
for having assembled this machine. The second part of the computations was performed using 
240 GB of RAM on a 4 x Eight-Core Intel(R) Xeon(R) CPU E5-4640 @ 2.40GHz
machine of the cluster of the Dipartimento di Matematica of the University of Padova.

\newpage

\begin{table}[H]
\centering
\begin{tabular}{|c|c|r|r|}
\hline
$d$ & $a$ & $\gamma(d,a)$ \phantom{012345678901234567890}& \text{precision} \\
\hline
$1$ & $1$ &  $ 0.57721 56649 01532 86060 65120 90082 40243 10421\dotsc$ & $477\,511\,832\,674$ \\
\hline
$2$ & $1$ &  $ 1.27036 28454 61478 17002 37442 11540 57899 91176\dotsc$ & $477\,511\,832\,674$\\
\hline
$3$ & $1$ &  $ 1.09904 95258 66676 53048 44653 68305 61011 72687\dotsc$ & $130\,000$\\
    & $2$ &  $ 0.02747 22833 68911 17581 96693 40238 05516 60971\dotsc$ & $130\,000$\\
\hline
$4$ & $1$ &  $ 0.98672 25683 13428 62885 16284 85266 20006 80853\dotsc$ & $130\,000$\\
    & $3$ &  $ 0.28364 02771 48049 54117 21157 26274 37893 10323\dotsc$ & $130\,000$ \\
\hline
$5$ & $1$ &  $ 0.60852 39515 39242 47207 97932 58724 29778 44852\dotsc$ & $100$\\
    & $2$ &  $-0.35215 15687 27300 94966 16629 36463 23178 29793\dotsc$ & $100$\\
    & $3$ &  $-0.02772 97956 30966 82652 57700 61450 59717 05577\dotsc$ & $100$\\
    & $4$ &  $ 0.75093 25558 29083 25836 43416 62578 48050 99754\dotsc$ & $100$\\
\hline
$6$ & $1$ &  $ 1.09904 95258 66676 53048 44653 68305 61011 72687\dotsc$ & $130\,000$\\
    & $5$ &  $ 0.72061 94639 28856 48523 69014 61696 23173 41726\dotsc$ & $130\,000$\\
\hline
$7$ & $1$ &  $ 0.52481 50406 92497 35626 25223 41149 53328 99061\dotsc$ & $100$\\
    & $2$ &  $-0.20560 54381 56889 24645 34786 98566 56334 80791\dotsc$ & $100$\\
    & $3$ &  $-0.20737 92262 04487 01767 49633 27989 58199 80539\dotsc$ & $100$\\
    & $4$ &  $ 0.39413 62902 98642 49586 94917 73880 94933 98481\dotsc$ & $100$\\
    & $5$ &  $ 0.00220 39624 97149 58098 05463 31645 38511 14840\dotsc$ & $100$\\
    & $6$ &  $ 0.39336 33939 50505 24247 32857 93869 87665 75430\dotsc$ & $100$\\
\hline
$8$ & $1$ &  $ 0.85493 54647 72769 23354 33587 20452 45006 49277\dotsc$ & $100$\\
    & $3$ &  $-0.07982 91608 61031 25405 62783 91570 65001 34207\dotsc$ & $100$\\
    & $5$ &  $ 0.13178 71035 40659 39530 82697 64813 75000 31576\dotsc$ & $100$\\
    & $7$ &  $ 0.36346 94380 09080 79522 83941 17845 02894 44530\dotsc$ & $100$\\
\hline
$9$ & $1$ &  $ 0.45839 34116 75144 88707 34014 32170 12245 66066\dotsc$ & $100$\\
    & $2$ &  $-0.43909 16249 87433 55177 22607 23033 84463 56563\dotsc$ & $100$\\
    & $4$ &  $ 0.37664 35697 72260 74850 13387 88635 14239 81437\dotsc$ & $100$\\
    & $5$ &  $ 0.03103 77589 75523 72540 80223 10280 02848 01451\dotsc$ & $100$\\
    & $7$ &  $ 0.26401 25444 19270 89490 97251 47500 34526 25184\dotsc$ & $100$\\
    & $8$ &  $ 0.43552 61493 80821 00218 39077 52991 87132 16083\dotsc$ & $100$\\
\hline
$12$ & $1$ & $ 0.78044 52516 80074 82217 81507 48784 08502 59477\dotsc$ & $100$\\
    & $5$ &  $ 0.20627 73166 33353 80667 34777 36482 11504 21376\dotsc$ & $100$\\
    & $7$ &  $ 0.31860 42741 86601 70830 63146 19521 52509 13210\dotsc$ & $100$\\
    & $11$ & $ 0.51434 21472 95502 67856 34237 25214 11669 20349\dotsc$ & $100$\\
\hline
\end{tabular}
\caption{{Truncated values ($40$ decimals) of some Euler constants 
from primes in arithmetic progressions. The final column records
the largest number of decimals known for the corresponding value of $\gamma(d,a)$.
The accuracy for $\gamma(1,1)=\gamma$ and 
$\gamma(2,1) =\gamma + \log 2$ comes from the results on the
\urltwo{http://www.numberworld.org/digits/}{numberworld\_website}.
Moreover, $\gamma(3,2) = -\gamma(3,1) + \gamma + (\log 3)/2$, $\gamma(6,1)=\gamma(3,1)$ and
$\gamma(6,5) = \gamma(3,2) + \log 2$. 
For $d=3,4$ the decimals given agree with those 
determined by Moree \cite{bias} (33, respectively 26 decimals).
}}
\label{tab:24}
\end{table}

\newpage

\bigskip
\begin{flushleft} 
Alessandro Languasco, Universit\`a di Padova,\\
Dipartimento di Matematica ``Tullio Levi-Civita'', \\
via Trieste 63, 35121 Padova, Italy.\\
e-mail: alessandro.languasco@unipd.it
\end{flushleft}

\bigskip
\begin{flushleft}
Pieter Moree, Max Planck Institute for Mathematics\\ 
Vivatsgasse 7, 53111, Bonn, 
Germany.\\  
e-mail: moree@mpim-bonn.mpg.de 
\end{flushleft}

\newpage
\appendix

\section{The \texorpdfstring{$\gamma(d,a)$}{gdaformulae} formulae for \texorpdfstring{$d=5,7,8,9,12$}{variousd}}
\label{sec:appendix}
We record here how the formulae of Theorem \ref{mainabelian} 
can be written in a more explicit way for some small values of $d$.
For this we use \eqref{compacttwo}.
These formulae can also be used
to double-check the numerical results obtained with the algorithm
presented in Section \ref{sec:algorithm}, see also Remark \ref{Remark-sums}.
We recall that $\gamma_1(d,a)$ is defined in \eqref{gamma1-def}.

\subsection{The case \texorpdfstring{$d=5$}{d5}.}
We have
\begin{align*}
\gamma(5,1)
&=
\gamma_1(5,1)
+ 
\sum_{p\equiv 2,3 \pmod* 5}  \frac{\log p}{p^{4}-1}
+
\sum_{p\equiv 4 \pmod* 5}  \frac{\log p}{p^{2}-1};
\\
\gamma(5,2)
&=
\gamma_1(5,2)
+  
\sum_{p\equiv 3 \pmod* 5}\! \frac{p\,\log p}{p^{4}-1}
- 
\sum_{p\equiv 2 \pmod* 5}\!\!\!\!\! \log p\, \frac{p^2+p+1}{p^{4}-1};
\\
\gamma(5,3)
&=
\gamma_1(5,3)
+ 
\sum_{p\equiv 2 \pmod* 5} \!\!\ \frac{p\,\log p}{p^{4}-1}
-
\sum_{p\equiv 3 \pmod* 5} \!\!\!\!\! \log p\, \frac{p^2+p+1}{p^{4}-1};
\\
\gamma(5,4)
&=
\gamma_1(5,4)
+ 
\sum_{p\equiv 2,3 \pmod* 5} \frac{p^2\,\log p}{p^{4}-1}
-
\sum_{p\equiv 4 \pmod* 5} \frac{\log p}{p^{2}-1}.
\end{align*}

\subsection{The case \texorpdfstring{$d=7$}{d7}.}
\label{sec:d7} We have
\begin{align*}
\gamma(7,1)
&=
\gamma_1(7,1)
+  
\sum_{p\equiv 2,4 \pmod* 7}  \frac{\log p}{p^{3}-1}
+ 
\sum_{p\equiv 3,5 \pmod* 7}  \frac{\log p}{p^{6}-1}
+ 
\sum_{p\equiv 6 \pmod* 7}  \frac{\log p}{p^{2}-1};
\\
\gamma(7,2)
&=
\gamma_1(7,2)
+  
\sum_{p\equiv 3 \pmod* 7}\frac{p^4\, \log p}{p^{6}-1}
+  
\sum_{p\equiv 4 \pmod* 7} \frac{p\, \log p}{p^{3}-1}
+  
\sum_{p\equiv 5 \pmod* 7} \frac{p^2\, \log p}{p^{6}-1}
\\&\hskip1.8cm
- 
\sum_{p\equiv 2 \pmod* 7} \frac{(p+1)\log p}{p^{3}-1};
\\
\gamma(7,3)
&=
\gamma_1(7,3)
+  
\sum_{p\equiv 5 \pmod* 7}\frac{p\, \log p}{p^{6}-1}
- 
\sum_{p\equiv 3 \pmod* 7}\!\!\!\!\! \log p\, \frac{p^4+p^3+p^2+p+1}{p^{6}-1};
\\
\gamma(7,4)
&=
\gamma_1(7,4)
+  
\sum_{p\equiv 2 \pmod* 7} \frac{p\, \log p}{p^{3}-1}
+  
\sum_{p\equiv 3 \pmod* 7} \frac{p^2\, \log p}{p^{6}-1}
+  
\sum_{p\equiv 5 \pmod* 7} \frac{p^4\, \log p}{p^{6}-1}
\\&\hskip1.8cm
- 
\sum_{p\equiv 4 \pmod* 7} \frac{(p+1)\log p}{p^{3}-1};
\\
\gamma(7,5)
&=
\gamma_1(7,5)
+  
\sum_{p\equiv 3 \pmod* 7} \frac{p\, \log p}{p^{6}-1}
- 
\sum_{p\equiv 5 \pmod* 7}\!\!\!\!\! \log p\, \frac{p^4+p^3+p^2+p+1}{p^{6}-1};
\\
\gamma(7,6)
&=
\gamma_1(7,6)
+  
\sum_{p\equiv 3,5 \pmod* 7}\!\!\frac{p^3\,\log p}{p^{6}-1} 
- 
\sum_{p\equiv 6 \pmod* 7} \frac{\log p}{p^{2}-1}.
\end{align*}

\subsection{The case \texorpdfstring{$d=8$}{d8}.}
We have
\begin{align*}
\gamma(8,1)
&=
\gamma_1(8,1)
+ 
\sum_{p\equiv 3,5,7 \pmod*{8}} \frac{\log p}{p^2-1};
\\
\gamma(8,j)
&=
\gamma_1(8,j)
-
\sum_{p\equiv j \pmod*{8}} \frac{\log p}{p^2-1} \quad \text{~for~}j=3,5,7.
\end{align*}

\subsection{The case \texorpdfstring{$d=9$}{d9}.}
We have
\begin{align*}
\gamma(9,1)
&=
\gamma_1(9,1)
+ 
\sum_{p\equiv 2,5 \pmod* 9}  \frac{\log p}{p^{6}-1}
+
\sum_{p\equiv 4,7 \pmod* 9}  \frac{\log p}{p^{3}-1}
+
\sum_{p\equiv 8 \pmod* 9}  \frac{\log p}{p^{2}-1};
\\
\gamma(9,2)
&=
\gamma_1(9,2)
+  
\sum_{p\equiv 5 \pmod* 9}\frac{p\, \log p}{p^{6}-1}
- 
\sum_{p\equiv 2 \pmod* 9}\!\!\!\!\! \log p\, \frac{p^4+p^3+p^2+p+1}{p^{6}-1};
\\
\gamma(9,4)
&=
\gamma_1(9,4)
+  
\sum_{p\equiv 2 \pmod* 9} \frac{p^4\, \log p}{p^{6}-1}
+  
\sum_{p\equiv 5 \pmod* 9}\frac{p^2\, \log p}{p^{6}-1}
+  
\sum_{p\equiv 7 \pmod* 9} \frac{p\, \log p}{p^{3}-1}
\\&\hskip1.8cm
- 
\sum_{p\equiv 4 \pmod* 9}\frac{(p+1)\log p}{p^{3}-1};
\\
\gamma(9,5)
&=
\gamma_1(9,5)
+  
\sum_{p\equiv 2 \pmod* 9}\frac{p\, \log p}{p^{6}-1}
- 
\sum_{p\equiv 5 \pmod* 9} \log p\, \frac{p^4+p^3+p^2+p+1}{p^{6}-1};
\\
\gamma(9,7)
&=
\gamma_1(9,7)
+  
\sum_{p\equiv 2 \pmod* 9}\frac{p^2\, \log p}{p^{6}-1} 
+  
\sum_{p\equiv 4 \pmod* 9}\!\frac{p\,\log p}{p^{3}-1}
+  
\sum_{p\equiv 5 \pmod* 9}\!\frac{p^4\,\log p}{p^{6}-1} 
\\& \hskip1.8cm
- 
\sum_{p\equiv 7 \pmod* 9}\frac{(p+1)\log p}{p^{3}-1};
\\
\gamma(9,8)&
=
\gamma_1(9,8)
+  
\sum_{p\equiv 2,5 \pmod* 9}\frac{p^3\, \log p}{p^{6}-1}  
- 
\sum_{p\equiv 8 \pmod* 9}  \frac{\log p}{p^{2}-1}.
\end{align*}

\subsection{The case \texorpdfstring{$d=12$}{d12}.}
We have
\begin{align*}
\gamma(12,1)
&=
\gamma_1(12,1)
+  
\sum_{p\equiv 5,7,11 \pmod*{12}} \frac{\log p}{p^2-1};
\\
\gamma(12,j)
&=
\gamma_1(12,j)
-  
\sum_{p\equiv j \pmod*{12}} \frac{\log p}{p^2-1}\quad \text{~for~}j=5,7,11.
\end{align*}

\end{document}